\documentclass[reqno,12pt]{amsart}
\usepackage[utf8]{inputenc}

\usepackage{graphicx}

\usepackage{amsmath,amssymb,amsthm,mathrsfs,color,times,textcomp,verbatim,yfonts}

\usepackage[left=1.1in, right=1.1in, top=1in]{geometry}

\usepackage[colorlinks=true]{hyperref} 
\hypersetup{urlcolor=blue, citecolor=red, linkcolor=blue}

\usepackage{mathtools}
\mathtoolsset{showonlyrefs,showmanualtags}

\usepackage[square,numbers]{natbib}

\numberwithin{equation}{section}
\theoremstyle{plain} 
\newtheorem{thm}{Theorem}[section]
\newtheorem{prop}[thm]{Proposition}
\newtheorem{lem}[thm]{Lemma}

\theoremstyle{definition}
\newtheorem{defn}[thm]{Definition}

\newtheorem{rem}[thm]{Remark}

\newcommand{\R}{\mathbb{R}}
\renewcommand{\S}{{\mathcal{S}_n}}
\newcommand{\E}{\mathcal{E}}
\newcommand{\sym}{\text{sym}}


\title{Very weak solutions of the Dirichlet problem for 2-Hessian equation}

\author{Tongtong Li}
\address{Institute of Mathematics, Academy of Mathematics and Systems Science, Chinese Academy of Sciences, No.55 Zhongguancun East Road, 100190, Beijing, China}
\email{litongtong221@mails.ucas.ac.cn}

\author{Guohuan Qiu}
\address{Institute of Mathematics, Academy of Mathematics and Systems Science, Chinese Academy of Sciences, No.55 Zhongguancun East Road, 100190, Beijing, China}

\email{qiugh@amss.ac.cn}

\date{\today}
\subjclass[2020]{Primary  35M10; Secondary 76B03}
\keywords{2-Hessian equation; Dirichlet problem; very weak solutions; Nash-Kuiper construction}

\begin{document}

\maketitle
\begin{abstract}
   For any $\alpha $ small, we construct infinitely many $C^{1,\alpha}$ very weak solutions to the 2-Hessian equation with prescribed boundary value. The proof relies on the convex integration method and cut-off technique.
\end{abstract}

\setlength{\parskip}{0.5em}

\section{Introduction}

Given a smooth function $v:\R^n\to\R$, the $2$-Hessian equation of $v$ is defined by\footnote{Our definition of $\sigma_2$ is different from the standard one by a factor $2$.}
\begin{equation}
\sigma_2(\nabla^2v) :=\sum^n_{i,j=1}[\partial_{ii}v\partial_{jj}v-(\partial_{ij}v)^2] =f. \label{sigma2}
\end{equation}

In the seminal work \cite{caffarelli1985dirichlet}, Caffarelli, Nirenberg and Spruck established the existence and uniqueness of a classical solution to the Dirichlet problem:
\begin{equation}
\begin{cases}\label{eq1}
\sigma_2(\nabla^2 v) = f>0 &\text{in }\Omega\subset\R^n,\ n\ge2, \\
    v = g &\text{on }\partial\Omega,
\end{cases}
\end{equation}
Furthermore, existence and uniqueness of the Dirichlet problem \eqref{eq1} also hold for 2-convex weak solutions in the viscosity sense, see \cite{urbas1990existence,trudinger1990dirichlet}.

The 2-Hessian equation \eqref{sigma2} is a simplified model of the scalar curvature equation for hypersurfaces. Motivated by the Weyl isometric embedding problem, E. Heinz \cite{heinz1959elliptic} established the interior estimates when $n=2$. This is a fully nonlinear partial differential equation for which the regularity is a highly nontrivial problem. Pogorelov \cite{pogorelov1978multidimensional} constructed non-$C^2$ convex solutions to the equation $\det D^{2}u=1$ in $B_{r}\subseteq\mathbb{R}^{n}$ when $n\geq3$. Warren and Yuan \cite{warren2009hessian} established the interior estimate for $\sigma_2(D^2 u)=1$ when $n=3$. For smooth $f>0$ and $n=3$, the second named author \cite{qiu2017interior, qiu2019interior} proved the interior estimates for 2-Hessian equations and scalar curvature equations.
Recently, Shankar and Yuan \cite{shankar2023hessian} derived a priori interior Hessian estimate and interior regularity for $\sigma_2(D^2 u)=1$ in dimension four.  Combining the solvability and uniqueness of the Dirichlet problem \eqref{eq1} for viscosity weak solutions, the above a priori estimates imply that viscosity solutions become smooth when $n=2,3,4$ (at least for $f=1$ when $n=4$). In higher dimensions, additional convexity conditions are assumed to obtain the interior estimate and regularity of the 2-Hessian equation, as seen in papers like \cite{guan2019interior, mcgonagle2019hessian, shankar2020hessian, Mooney2021PAMS,shanker2021regularity}. The question of whether viscosity solutions to \eqref{sigma2} become smooth in dimensions $n \geq 5$ remains a famous open problem in the field. The most crucial ingredient for obtaining the interior estimate is the validity of the Jacobi inequality when $n=3$ and $4$, as demonstrated in \cite{warren2009hessian, qiu2017interior, shankar2023hessian}. But the crucial Jacobi inequality does not hold when $n\geq 5$, see \cite{shankar2023hessian}. Thus one may inquire whether it is possible to construct $C^{1,\alpha}$ viscosity weak solutions by combining the convex integration method with other approaches. The motivation behind our study of \emph{very weak solutions}, as defined in \ref{def1}, is to explore possible ways to construct singular $C^{1,\alpha}$ viscosity weak solutions when $n \geq 5$. Although \emph{very weak solutions} and viscosity solutions are entirely different types of weak solutions, it is still worth exploring possible connections between them.

In this paper, we focus on the non-uniqueness of \emph{very weak solutions}, defined as follows.
\begin{defn}\label{def1}
Let $\Omega$ be a domain in $\R^n$ and $v\in W^{1,2}_{loc}(\Omega)$. We say $v$ solves $\sigma_2(\nabla^2v)=f$ in very weak sense if
\begin{equation*}
-\sum^n_{i,j=1}\int_\Omega \sigma_2^{ij}(\nabla^2\phi)\partial_i v\partial_j v = \int_\Omega f\phi,\quad\forall \phi\in C^\infty_c(\Omega),
\end{equation*}
where
\begin{equation*}
\sigma_2^{ij}(\nabla^2\phi)=\Delta\phi\delta_{ij}-\partial_{ij}\phi.    
\end{equation*}
And $v$ is a very weak solution to \eqref{eq1} if further $v\in C(\bar\Omega)$ and $v=g$ on $\partial\Omega$.
\end{defn}
This definition is motivated by the double divergence structure of $\sigma_2(\nabla^2v)$ (see Section \ref{sec0}). For the Laplace equation $\Delta u=f$, the well-known Weyl lemma guarantees the smoothness of locally integrable weak solutions in the following sense:
\begin{equation*}
\int_\Omega u \Delta \phi = \int_\Omega f \phi, \quad \forall \phi \in C^\infty_c(\Omega).
\end{equation*}
 One interesting point compared with the Laplace equation is that after integrating by parts twice, there are quadratic gradient terms for 2-Hessian equation. For the Monge-Ampere equation, one well-known concept of weak solutions was introduced by Alexandrov \cite{Aleksandrov58}, now commonly referred to as Alexandrov solutions, which are equivalent to viscosity solutions, as shown in \cite{Gutirrez16}.  After Trudinger and Wang's series of challenging works \cite{Trudinger-Wang1, Trudinger-Wang2, Trudinger-Wang3}, Alexandrov solutions have been successfully extended to weak solutions defined by the $2$-Hessian measure for the $\sigma_2$ operator. It is important to note that the concept of the  \emph{very weak} solution is distinct from the weak solution defined in \cite{Trudinger-Wang1,Trudinger-Wang2,Trudinger-Wang3}.

In dimension two, \eqref{sigma2} reduces to Monge-Amp\`ere equation. In this case, the concept of distributional $\sigma_2$ was introduced by \citet{iwaniec01concept} in the name of \emph{weak Hessian}. \citet{lewicka17convex} (see also \cite{jean22revisiting,codenotii2019visualization}) noticed that \eqref{sigma2} is closely related to isometric immersion of surfaces into $\R^3$. Using the convex integration method there, they showed that $C^{1,\alpha}(\bar\Omega)$ very weak solutions to \eqref{sigma2} are dense in $C^0(\bar\Omega)$ for any $\alpha<1/7$ and $f\in L^{7/6}(\Omega)$. This was improved by \citet{cao19very} to $\alpha<1/7$ and recently by \citet*{cao23c1frac13} to $\alpha<1/3$. For the corresponding Dirichlet problem \eqref{eq1}, \citet{cao23dirichlet} proved Theorem \ref{mainthm2} and \ref{mainthm} with $\alpha<1/5$.
 
In general dimensions $n\ge 2$, \citet{lewicka22monge} considered the following Monge-Amp\`ere system
\begin{equation*}
\mathfrak{Det}\nabla^2 v=F\quad\text{in }\Omega\subset\R^n,
\end{equation*} 
where $F=(F_{ij,st})_{1\le i,j,s,t\le n}:\Omega\to\R^{n^4}$ is given, $v:\Omega\to\R^k$ is unknown and
\begin{equation*}
(\mathfrak{Det}\nabla^2 v)_{ij,st} :=\langle \partial_{is}v,\partial_{jt}v\rangle-\langle \partial_{it}v,\partial_{js}v\rangle.
\end{equation*}
She derived the denseness result for $\alpha<\frac{1}{1+(n^2+n)/k}$ (cf. \cite{lewicka23monge2d}). When $k=1$, this covers the counterpart of Theorem \ref{mainthm2} without boundary conditions, since $\sigma_2(\nabla^2 v)=\sum_{i,j}(\mathfrak{Det}\nabla^2v)_{ij,ij}$.
  
In this paper, we prove the following theorems, generalizing the result of \citet{cao23dirichlet} to higher dimensions.
\begin{thm}\label{mainthm2}
    Let $\Omega$ be a bounded domain in $\R^n$ with $C^{2,\kappa}$ boundary $\partial\Omega$, $f\in C^{0,\kappa}(\bar\Omega)$, $\kappa\in (0,1)$. Then there exists some $ g\in C^{2,\kappa}(\partial\Omega)$, such that: for any
    \begin{equation*}
    \alpha < \frac{1}{1+n+n^2},
\end{equation*}
there exist infinitely many very weak solutions $v$ to the Dirichlet problem \eqref{eq1} such that $v\in C^{1,\alpha}(\bar\Omega)$.
\end{thm}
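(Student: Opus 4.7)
The plan is to run a Nash--Kuiper type convex integration scheme for the ``Monge--Amp\`ere defect'' formulation of the $\sigma_2$ equation, with a boundary cut-off adapted from the two-dimensional argument of \cite{cao23dirichlet}. The starting point is the double-divergence identity
\begin{equation*}
\sigma_2(\nabla^2 v) \;=\; \partial_i\partial_j\bigl(\partial_i v\,\partial_j v - |\nabla v|^2\delta_{ij}\bigr)\quad\text{for smooth }v,
\end{equation*}
which recasts Definition \ref{def1} as follows: $v\in C^{1,\alpha}(\bar\Omega)$ is a very weak solution if and only if there exists a symmetric matrix field $A$ satisfying the linear equation $\partial_i\partial_j A_{ij}=f$ in the distributional sense together with the nonlinear pointwise constraint $A_{ij} = \tfrac12(\partial_iv\,\partial_jv - |\nabla v|^2\delta_{ij})$. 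The strategy is to construct iterates $(v_q, A_q)$ with $v_q|_{\partial\Omega}=g$ and $\partial_i\partial_j A_{q,ij}=f$ for every $q$, while driving the pointwise \emph{defect}
\begin{equation*}
D_{q,ij} \;:=\; \tfrac12\bigl(\partial_i v_q\,\partial_j v_q - |\nabla v_q|^2\delta_{ij}\bigr) - A_{q,ij}
\end{equation*}
to zero in $C^0$.

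First, I would choose a smooth $v_0\in C^\infty(\bar\Omega)$ and set $g:=v_0|_{\partial\Omega}\in C^{2,\kappa}(\partial\Omega)$; a compatible $A_0$ is produced by solving $\partial_i\partial_j A_{0,ij}=f$ via standard Schauder theory on $\Omega$, after fixing a gauge in the space of symmetric matrix fields. A further smooth perturbation of $v_0$ arranges $D_0$ to be pointwise positive definite on $\bar\Omega$ and identically zero in a collar of $\partial\Omega$, so that $v_0$ is already a classical solution near the boundary. The iteration then proceeds in stages. At stage $q$, I decompose pointwise
\begin{equation*}
D_q \;=\; \sum_{k=1}^{N}\gamma_{q,k}^2\,\xi_k\otimes\xi_k,\qquad N=\tfrac{n(n+1)}{2},
\end{equation*}
and add $N$ successive oscillatory corrections
\begin{equation*}
w_{q,k}(x) \;=\; \frac{\gamma_{q,k}(x)\,\chi_q(x)}{\lambda_{q,k}}\sin\bigl(\lambda_{q,k}\,\xi_k\cdot x\bigr),
\end{equation*}
where $\chi_q$ is a smooth cut-off vanishing near $\partial\Omega$ and $\lambda_{q,k}$ is a super-exponentially growing frequency. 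Each substep cancels one rank-one direction of $D_q$ via the local average of $\tfrac12\nabla w_{q,k}\otimes\nabla w_{q,k}$; the high-frequency residuals and cut-off errors are absorbed into $A_{q+1}$ by solving a localized linear equation $\partial_i\partial_j E_{ij}=r$ with $E$ compactly supported in $\Omega$, thereby preserving $v_q|_{\partial\Omega}=g$ and $\partial_i\partial_j A_{q,ij}=f$ throughout the scheme.

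The H\"older exponent is set by interpolation: $\|w_{q,k}\|_{C^0}\lesssim \delta_q^{1/2}/\lambda_{q,k}$ against $\|w_{q,k}\|_{C^1}\lesssim \delta_q^{1/2}$. Compounding the $N=n(n+1)/2$ substeps per stage with a double-exponential frequency schedule $\lambda_q\sim\lambda_0^{b^q}$ enforces $\alpha<1/(1+2N)=1/(1+n+n^2)$. Infinitely many solutions arise from the freedom of choosing the rank-one decomposition (or the phase of the waves) at each stage, giving distinct $C^{1,\alpha}$ limits. The hard part is not the algebra of the convex integration step---which follows the pattern of \cite{lewicka22monge} with target dimension $k=1$---but the coupling of the cut-off with the linear correction: each $w_{q,k}$ must vanish in a shrinking neighbourhood of $\partial\Omega$, while the compactly-supported correction $E_{q+1}$ for $A_q$ must enjoy quantitative Schauder estimates on an irregular moving support without consuming the super-exponential scale budget. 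This bookkeeping, parallel to \cite{cao23dirichlet} in 2D, is the heart of the argument.
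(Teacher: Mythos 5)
Your overall architecture (the double-divergence reformulation, $N=n(n+1)/2$ rank-one substeps per stage, the exponent $1/(1+2N)$, a boundary cut-off on a shrinking collar as in \cite{cao23dirichlet}) matches the paper, but the mechanism you propose for closing each stage has a genuine gap. When you perturb only the scalar $v$ by $\frac{\gamma\chi}{\lambda}\sin(\lambda\,\xi\cdot x)$, the change in $\frac12\nabla v\otimes\nabla v$ contains, besides the desired average $\frac14\gamma^2\chi^2\,\xi\otimes\xi$, an $O(\gamma)$ oscillatory cross term $\gamma\chi\cos(\lambda\,\xi\cdot x)\,\sym(\xi\otimes\nabla v_q)$ and an $O(\gamma^2)$ oscillatory part of $\cos^2$. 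You propose to absorb these into $A_{q+1}$ by solving a compactly supported $\partial_i\partial_j E_{ij}=r$. But to preserve $\partial_i\partial_j A_{ij}=f$ you would need $E$ with the \emph{same} double divergence as the oscillatory residual matrix yet pointwise small, i.e.\ a ``double anti-divergence'' operator gaining two factors of $\lambda^{-1}$ from the oscillation, with quantitative bounds on an irregular moving support. You neither construct such an operator nor cite one, and it is precisely the hard analytic content your variant would require. The paper avoids this entirely: it introduces an auxiliary vector field $w$ with $L(\sym\nabla w)=0$ and solves $A=\frac12\nabla v\otimes\nabla v+\sym\nabla w$ with $A$ \emph{fixed} once and for all; the cross term is cancelled exactly by the update $w_i=w_{i-1}-\frac{1}{\mu_i}\Gamma_1\nabla v_{i-1}+\frac{1}{\mu_i}\Gamma_2\,\xi_i$, and the oscillatory quadratic term by the algebraic identity $\partial_t\Gamma_2+\frac12|\partial_t\Gamma_1|^2=s^2$. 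Without $w$ (or a quantitative inverse double divergence) the defect $D_{q+1}$ does not close.

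Two further points. Your initial data requirement that ``$D_0$ be pointwise positive definite on $\bar\Omega$ and identically zero in a collar of $\partial\Omega$'' is self-contradictory as stated; what is needed, and what the paper arranges, is $D^b=\psi\,Id$ with $\psi>0$ in $\Omega$ and $\psi=0$ on $\partial\Omega$, obtained from $-(2n-2)\Delta\psi=f-\sigma_2(\nabla^2 v^b)>0$ and the strong maximum principle. Moreover, for Theorem \ref{mainthm2} (where $f$ may change sign) the boundary datum is not free: one must first choose $v^b$ with $\sigma_2(\nabla^2 v^b)<f$ and then \emph{define} $g=v^b|_{\partial\Omega}$, which is exactly why the theorem only asserts existence of some $g$. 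Finally, the decomposition $D_q=\sum_k\gamma_{q,k}^2\,\xi_k\otimes\xi_k$ degenerates near $\partial\Omega$ where $D_q\to0$; Lemma \ref{lem2} applies only after rescaling $D_q$ into a neighbourhood of $Id$, and that rescaling blows up at the boundary. The paper needs the special structure $D^b=\psi\,Id$, truncations $\psi_q$ of $\psi$ at level $\delta_q$, and a two-case gluing of the amplitudes $a_i$ (cf.\ Remark \ref{rem995}) to make the cut-off compatible with the decomposition. This interaction is substantive, not bookkeeping, and is missing from your outline.
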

\begin{thm}\label{mainthm}
     Let $\Omega,f,\kappa$ be as in Theorem \eqref{mainthm2}. Assume further $f>0$. Then for any $ g\in C^{0,\kappa}(\partial\Omega)$ and any
    \begin{equation*}
    \alpha < \frac{1}{1+n+n^2},
\end{equation*}
there exist infinitely many very weak solutions $v$ to the Dirichlet problem \eqref{eq1} such that $v\in C^{1,\alpha}(\bar\Omega)$.
\end{thm}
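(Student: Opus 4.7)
The plan is to bootstrap from Theorem \ref{mainthm2} by exploiting the hypothesis $f>0$, which is exactly the input needed to build a strict subsolution realising an arbitrary $C^{0,\kappa}$ boundary datum. Once such a starting point is constructed, the Nash–Kuiper-type iteration underlying Theorem \ref{mainthm2} can be rerun, with essentially the same stage/step estimates, to converge to an exact very weak solution.

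\textbf{Step 1 (subsolution matching $g$).} Fix a $C^{0,\kappa}$ extension $h$ of $g$ to $\bar\Omega$, for instance the harmonic extension. Since $f>0$, pick a strongly $2$-convex quadratic $w(x)=\Lambda|x-x_0|^2$ with $\Lambda$ large enough that $\sigma_2(\nabla^2 w)\gg \|f\|_{C^0(\bar\Omega)}$. Take
\begin{equation*}
v_0 := h - \mu w
\end{equation*}
for a small parameter $\mu>0$, and modify by a cut-off near $\partial\Omega$ so that $v_0=g$ on $\partial\Omega$ while keeping $v_0=h-\mu w$ on a large interior subdomain. Using the identity $\sigma_2(\nabla^2 v)=\partial_{ij}(\partial_i v\,\partial_j v -\delta_{ij}|\nabla v|^2)$ from Section \ref{sec0}, the very weak defect
\begin{equation*}
\mathcal{D}(v_0) := f-\sigma_2(\nabla^2 v_0)
\end{equation*}
receives the strictly positive contribution $\mu^2\sigma_2(\nabla^2 w)$ together with lower-order cross terms, and can therefore be decomposed into a sum of rank-one symmetric pieces lying in the open admissible cone required by the one-step perturbation lemma.

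\textbf{Step 2 (iterative convex integration with boundary preservation).} Starting from $v_0$, run the stage/step scheme from the proof of Theorem \ref{mainthm2}: at stage $q$ add a perturbation
\begin{equation*}
v_{q+1}-v_q = \sum_k a_{k,q}(x)\,\Gamma_k\bigl(\lambda_q\xi_k\cdot x\bigr)\,\eta_q(x),
\end{equation*}
where the amplitudes $a_{k,q}$ come from a rank-one decomposition of the current defect, the frequencies $\lambda_q\to\infty$ super-exponentially, and $\eta_q$ is a cut-off vanishing in a thin shell around $\partial\Omega$ so that $v_{q+1}=g$ on $\partial\Omega$. The standard interpolation balance, with the number of required directions of order $(n^2+n)/2$ dictated by the symmetry of the defect tensor, gives $\sum_q\|v_{q+1}-v_q\|_{C^{1,\alpha}}<\infty$ for every $\alpha<1/(1+n+n^2)$ and $\mathcal{D}(v_q)\to 0$ in the dual norm implicit in Definition \ref{def1}. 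Infinitely many distinct limits then arise from the freedom in the direction/phase choices at the first stage, since any two nonequivalent choices give limits whose $L^2$-distance stays bounded away from zero.

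\textbf{Main obstacle.} The delicate point is the boundary-layer analysis. The cut-off width has to track $\lambda_q^{-1}$, yet differentiating $\eta_q$ generates a new boundary defect that must be absorbed at the next stage without sacrificing the endpoint exponent $1/(1+n+n^2)$. Because $g$ is only $C^{0,\kappa}$, the extension $h$ itself carries irregular defect near $\partial\Omega$, so the iteration cannot start from a globally positive-definite defect: positivity holds only on a shrinking interior subdomain whose expansion across stages must be coordinated with the frequency schedule. Generalising the planar cut-off technique of \cite{cao23dirichlet} to dimension $n$, where the defect lives in a space of dimension $(n^2+n)/2$ — which is precisely what forces the Hölder exponent $1/(1+n+n^2)$ — is the main technical task.
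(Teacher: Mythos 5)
The central gap is that you run the scheme on the scalar defect $f-\sigma_2(\nabla^2 v_0)$, whereas the convex-integration machinery here (as in \cite{cao23dirichlet,lewicka22monge}) operates on a \emph{matrix} deficit. The equation must first be split as $-L(A)=f$ (a linear problem for a symmetric matrix field $A$) together with $A=\frac12\nabla v\otimes\nabla v+\sym\nabla w$ for an auxiliary vector field $w$ satisfying $L(\sym\nabla w)=0$; the object decomposed into rank-one pieces $a_i^2\,\xi_i\otimes\xi_i$ and cancelled step by step is $D_q=A-\frac12\nabla V_q\otimes\nabla V_q-\sym\nabla W_q-\delta_{q+1}Id$, and each corrugation updates $W_q$ as well as $V_q$: the $\Gamma_1$-term in $w$ is what removes the linear cross terms so that only the quadratic term survives and realizes $a_i^2\,\xi_i\otimes\xi_i$ via \eqref{relation}. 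A scalar quantity cannot be ``decomposed into a sum of rank-one symmetric pieces,'' and without the vector field $w$ the one-step cancellation simply does not close. Your Step 1 likewise does not produce what the iteration needs: the admissibility condition is not $\sigma_2(\nabla^2 v_0)<f$ but positive definiteness of $A-\frac12\nabla v^b\otimes\nabla v^b$ for a specific $A$ solving $-L(A)=f$, degenerating at $\partial\Omega$ in a controlled way.

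This is exactly where the hypothesis $f>0$ and the harmonic extension enter, and where your construction diverges in a way that would fail. Taking $\Delta v^b=0$, Newton's inequality gives $\sigma_2(\nabla^2 v^b)\le\frac{n-1}{2n}(\Delta v^b)^2=0$, hence $f-\sigma_2(\nabla^2 v^b)>0$; solving $-(2n-2)\Delta\psi=f-\sigma_2(\nabla^2 v^b)$ with $\psi=0$ on $\partial\Omega$ and setting $A=\psi\,Id+\frac12\nabla v^b\otimes\nabla v^b$ yields $-L(A)=f$ and an initial deficit $D^b=\psi\,Id$: a nonnegative multiple of the identity, strictly positive inside by the strong maximum principle and vanishing exactly on $\partial\Omega$. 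The cut-off is then performed on the level sets $\{\psi>\delta_q\}$, so its width is dictated by the amplitude schedule $\delta_q$ (with $\|\nabla\eta_q\|_0\lesssim\delta_q^{-1}$ and mollification scale $l\lesssim\delta_{q+2}$), not by $\lambda_q^{-1}$; the errors produced by $\nabla\eta_q$ are absorbable precisely because the deficit itself is $O(\delta_q)$ on the support of $\nabla\eta_q$. Your $v_0=h-\mu\Lambda|x-x_0|^2$ with a boundary cut-off provides neither a matrix field $A$ with $-L(A)=f$ nor a deficit of the form $\psi\,Id$ degenerating at the boundary, and the ``main obstacle'' you flag — absorbing the new defect created by differentiating a cut-off of width $\lambda_q^{-1}$ — is left unresolved; it is exactly the point the $\psi$-level-set construction is designed to eliminate.
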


The proof is based on an iteration scheme called convex integration, which was originally introduced by Nash-Kuiper \cite{nash, kuiper} to construct $C^1$ isometric embedding of manifolds, and later generalized (and named) by \citet{gromov74convex, gromov86partial} to more general setting to prove the abundance of (weak) solutions (Gromov's $h$-principle). After Borisov's earlier work \cite{borisov1965isometric,borisov2004irregular}, \citet*{conti12hprinciple,conti12errata} and \citet{cao22global} showed that any $C^1$ differentiable immersion (resp. embedding) of a smooth closed $n$-manifold into $\R^{n+1}$ can be deformed to an $C^{1,\alpha}$ isometric immersion (resp. embedding) if $\alpha<1/(1+n+n^2)$. For surfaces this was improved to $\alpha<1/5$ in \cite{delellis18nash,cao22global}.

These results asserting the \emph{abundance} of (weak) solutions are often referred to as \emph{flexibility} results. On the other hand, one obtains \emph{rigidity}, namely \emph{uniqueness} of solutions, if higher regularity is required. For instance, \citet{borisov1959parallel} showed that any $C^{1,\alpha},\alpha>2/3$ isometric \emph{embedding} of a smooth closed surface with positive Guass curvature into $\R^3$ is unique up to rigid motion.  A modern proof can be found in \cite{conti12hprinciple,conti12errata}. This result is based on the work of \citet{pogorelov1959rigidity,pogorelov1973extrinsic}. \citet[Section 3.5.5.C, Open Problem 34-36]{gromov17geometric} conjectures the regularity threshold of rigidity vs. flexibility to be $\alpha=1/2$. See \cite{delellis2021geometry,pakzad2022convexity} and references therein for recent developments. In \cite{pakzad2022convexity}, Pakzad also proved the rigidity of $C^{1,\alpha}$ very weak solutions to \eqref{eq1} for $n=2,\alpha>2/3$ and $\Omega=\R^2$. One interesting question regarding the rigidity aspect of the 2-Hessian equation is whether a $C^{1,\alpha}$ (for $\alpha \approx 1$) very weak solution of \eqref{eq1} become the viscosity weak solution.


This article is organized as follows. Section \ref{sec0} collects some basic facts, and in Remark \ref{rem1} we explain the idea of the proof. In Section \ref{sec1} we omit the boundary condition $v=g$ and construct very weak solutions by convex integration. The main ingredient is the one stage induction Proposition \ref{prop1}, relying on the decomposition Lemma \ref{lem2}. In Section \ref{sec2} we apply the cut-off technique in \cite{cao23dirichlet} such that the boundary condition is satisfied, thus finishing the proof of Theorem \ref{mainthm2} and \ref{mainthm}.

\section{Preliminaries.}\label{sec0}
\subsection{Distributional \texorpdfstring{$\sigma_2$}{sigma2}}
We observe that $\sigma_2(\nabla^2v)$ has the following divergence structure
\begin{align}
    \sigma_2(\nabla^2 v)
    &=\sum_{i,j}\partial_{i}(\partial_{i}v\partial_{jj}v)-\partial_{j}(\partial_{i}v\partial_{ij}v) \label{div1}\\
    &=\sum_{i,j}\partial_{ij}(\partial_{i}v\partial_{j}v)-\partial_{i}(\partial_{ij}v\partial_{j}v)
    -\partial_{j}(\partial_{i}v\partial_{ij}v) \label{div2}\\
    &=\sum_{i,j}\partial_{ij}(\partial_{i}v\partial_{j}v)
    -\frac{1}{2}\partial_{ii}(\partial_{j}v)^2
    -\frac{1}{2}\partial_{jj}(\partial_{i}v)^2. \label{div3}
\end{align}
By \eqref{div3}, there holds
\begin{align*}
    \int \sigma_2(\nabla^2 v)\phi
    &=\sum_{i,j}\int \partial_{ij}\phi \partial_{i}v\partial_{j}v-\frac{1}{2}\partial_{ii}\phi(\partial_{j}v)^2-\frac{1}{2}\partial_{jj}\phi(\partial_{i}v)^2\\
    &=-\sum_{i,j}\int \sigma_2^{ij}(\nabla^2\phi)\partial_i v\partial_j v,\quad \forall\phi\in C^\infty_c(\R^n),\\
\end{align*}
where
\begin{equation*}
\sigma_2^{ij}(\nabla^2\phi)=\Delta\phi\delta_{ij}-\partial_{ij}\phi.    
\end{equation*}
This allows us to define $\sigma_2(\nabla^2v)$ in distributional sense for any $v\in W^{1,2}_{loc}(\R^n)$. Accordingly, we can define \emph{very weak solutions} to $\sigma_2$ equation as in Definition \ref{def1}.

\begin{rem}
    In fact, we can provide different distributional definitions of $\sigma_2$ based on \eqref{div1} \eqref{div2} \eqref{div3}. Refer to \cite{iwaniec01concept} for a discussion in dimension two.
\end{rem}

\subsection{Notations}
Basically we follow the notations in \cite[Section 2]{cao19very}. The set of $n\times n$ symmetric matrices is denoted by $\S$. We use usual H\"older norms $\|\cdot\|_k,\|\cdot\|_{k,\beta}$. Also we write $\|(v,w)\|_k=\|v\|_k+\|w\|_k$.

We fix a non-negative radial function $\phi\in C^\infty_c(B_1(0))$, such that 
\begin{equation*}
\int_{\R^n}\phi(x)dx=1.
\end{equation*}
This $\phi$ will serve as a mollifier. Namely, let $\phi_l(x)=l^{-n}\phi(\frac{x}{l})$. For any function $h$ on $\R^n$, the mollification of $h$ at length $l$ is 
\begin{equation*}
h*\phi_l(x)=\int_{\R^n} h(x-y)\phi_l(y)dy.
\end{equation*}

A constant is called universal if it depends only on $n,\Omega,f,g$. $C>1$ denotes a positive universal constant that may vary from line to line. Besides, we write $a\lesssim b$ if $a\le Cb$, and $a\sim b$ if $\frac{b}{C}\le a\le Cb$, for some universal constant $C$. 

\subsection{Corrugation functions}
Let
\begin{equation*}
\Gamma_1(s,t)=\frac{s}{\pi}\sin(2\pi t),\quad\Gamma_2(s,t)=-\frac{s^2}{4\pi}\sin(4\pi t)
\end{equation*}
which satisfy the important relation
\begin{equation}\label{relation}
    \partial_t\Gamma_2(s,t)+\frac{1}{2}|\partial_t\Gamma_1(s,t)|^2=s^2.
\end{equation}
It's easy to get following bounds: for $0\le k\le 3$,
\begin{equation}\label{bounds}
\begin{aligned}
    |\partial_t^k\Gamma_1(s,t)|\le Cs,\quad |\partial_s\partial_t^k\Gamma_1(s,t)|\le C,\\
    |\partial_t^k\Gamma_2(s,t)|\le Cs^2,\quad |\partial_s\partial_t^k\Gamma_2(s,t)|\le Cs,\quad |\partial_s^2\partial_t^k\Gamma_2(s,t)|\le C.
\end{aligned}
\end{equation}
\begin{rem}\label{rem1}
    Such periodic functions $\Gamma_1,\Gamma_2$ satisfying \eqref{relation} are crucial in convex integration, whose existence is guaranteed by the quadratic structure in distributional $\sigma_2$. We give some explanations below.
    
    It turns out that \eqref{eq1} is equivalent to \eqref{2} \eqref{4} with $v=g$ on $\partial\Omega$. \eqref{2} is easy to solve since the elements of $A$ are independent. The idea of convex integration is, starting from arbitrary $(v^b,w^b)$, to construct inductively $\{(V_q, W_q)\}_{q=1}^\infty$ that finally converges in $C^{1,\alpha}$ sense to some solution $(v,w)$ of \eqref{4}. 
    
    By linearization of \eqref{4}, the derivative $(\nabla (v-v^b),\nabla(w-w^b)+\sym \nabla(v-v^b)\otimes\nabla v^b))$ has to satisfy a relation similar to $y+\frac{1}{2}x^2=1$, cf. \eqref{relation'}. Since the origin lies in the convex hull of the curve $y+\frac{1}{2}x^2=1$, we let the derivative move along the curve, such that its integration is small. Then $(v,w)$ oscillates around $(v^b,w^b)$, leading to the $C^0$ closeness $\|(v^b-v,w^b-w)\|_0\le\epsilon$. This is where the name "convex integration" comes from. 
    
    This is achieved by adding higher and higher frequency oscillatory functions (i.e. corrugation) produced by $\Gamma_1,\Gamma_2$ at each stage $(V_q,W_q)\to (V_{q+1},W_{q+1})$. Since \eqref{4} is actually a matrix equation, each stage has to be decomposed into $N_*=n(n+1)/2$ steps, which determines $(v,w)\in C^{1,\alpha}$ with $\alpha<1/(1+2N_*)$, cf. \cite{cao19very,cao23c1frac13}.
\end{rem}

\subsection{Mollification estimates.} Recall that for any $r,s\ge0$
\begin{gather}
    [h*\phi_l]_{r}\le [h]_{r}, \label{phil3}\\
    [h-h*\phi_l]_{r}\le C[h]_{r+s}l^{s},\quad [h*\phi_l]_{r+s}\le C[h]_r l^{-s},\label{phil1}\\
    [(h_1h_2)*\phi_l-(h_1*\phi_l)(h_2*\phi_l)]_r\le C\|h_1\|_1\|h_2\|_1 l^{2-r},\label{phil2}
\end{gather}
where $[h]_r=\|\nabla^r h\|_0$. See \cite[Lemma 1]{conti12hprinciple} for the proof. Mollification is important to control higher order derivatives, and the commutator inequality \eqref{phil2} is used to estimate deficit matrices. 

\section{Convex integration.}\label{sec1}
In this section we temporarily omit the boundary condition and construct very weak solutions to
\begin{equation}\label{1}
    \sigma_2(\nabla^2 v) = f \quad\text{in }\Omega.
\end{equation}
We first introduce some notations. Let $L$ be a linear differential operator acting on $\S$-valued functions defined by
\begin{equation}\label{defL}
L(A):=\sum_{i,j}\partial_{ii}A_{jj}+\partial_{jj}A_{ii}-2\partial_{ij}
A_{ij},\quad A=(A_{ij})_{1\le i,j\le n}:\Omega\to\S.
\end{equation}
Then equation \eqref{eq1} can be written as
\begin{equation*}
    -\frac{1}{2}L(\nabla v\otimes\nabla v)=f\quad \text{in}\ \Omega.
\end{equation*}
The operator $L$ generalizes the $\text{curl curl}$ operator in dimension two. In particular, for $\forall$ $w=(w_1,...,w_n):\Omega\to\R^n$, let
\begin{equation*}
    \sym\nabla w:=\frac{1}{2}(\nabla w+\nabla w^T)
    =(\frac{\partial_i w_j+\partial_j w_i}{2})_{1\le i,j\le n},
\end{equation*}
then by simple computation, we can check that
\begin{equation*}
    L(\sym\nabla w)=0
\end{equation*}
in distributional sense.
In fact, due to $\partial_i\sigma_2^{ij}(\nabla^2\phi)=\partial_j\sigma_2^{ij}(\nabla^2\phi)=0$, there holds
    \begin{align*}
        \sum_{i,j}\int \sigma_2^{ij}(\nabla^2\phi)(\frac{\partial_i w_j+\partial_j w_i}{2})
        =-\frac{1}{2}\sum_{i,j}\int \partial_i\sigma_2^{ij}(\nabla^2\phi)w_j
        +\partial_j\sigma_2^{ij}(\nabla^2\phi)w_i
        =0
    \end{align*}
for any $\phi\in C_c^\infty(\Omega)$.

So to solve \eqref{1} it suffices to find $(A,v,w)\in C^0(\bar\Omega,\S)\times C^1(\bar\Omega)\times C^1(\bar\Omega, \R^n)$ such that
\begin{align}
    -L(A)=f\quad\text{in}\ \Omega, \label{2}\\
    A=\frac{1}{2}\nabla v\otimes\nabla v+\sym\nabla w\quad\text{in}\ \Omega.\label{4}
\end{align}

\textbf{Part one: solve \eqref{2}.}
Let $A=(u+\tau)Id$, where $\tau>0$ is a constant and $u$ solves the Dirichlet problem
\begin{equation*}
-(2n-2)\Delta u =f\ \text{in}\ \Omega,\quad u=0\ \text{on}\ \partial\Omega.
\end{equation*}
Then
\begin{equation*}
    -L(A)=-(2n-2)\Delta u=f
\end{equation*}
and $A\in C^{2,\kappa}(\bar\Omega,\S)$ by classical elliptic PDE theory.

\textbf{Part two: solve \eqref{4}.}
Replacing $v^b$ by its mollification, we may assume $v^b\in C^\infty(\bar\Omega)$, and let $w^b=0$. Denote the initial deficit matrix by
\begin{equation}\label{901}
D^b:=A-\frac{1}{2}\nabla v^b\otimes\nabla v^b-\sym\nabla w^b.
\end{equation}
Recall $A=(u+\tau)Id$. By letting $\tau$ large enough, the problem boils down to the following:
\begin{thm}[\cite{lewicka22monge}, Theorem 1.1, $(d,k)=(n,1)$]\label{thm1}
Let $\Omega\subset\mathbb R^n$ be an open, bounded domain. Given a function $v^b\in C^1(\bar\Omega)$ a vector field $w^b\in C^1(\bar\Omega, \mathbb R^n)$ and a matrix field $A\in C^{0,\beta}(\bar\Omega, \S)$, assume that: 
\begin{equation}\label{902}
D^b:=A-\frac{1}{2}\nabla v^b\otimes\nabla v^b-\sym\nabla w^b\quad \text{satisfies}\quad D^b>\bar \tau Id\ on\ \bar\Omega,
\end{equation} 
for some $\bar \tau>0$, in the sense of matrix inequalities. Fix $\epsilon>0$ and let: 
\begin{equation*}
0< \alpha < \min\{\frac{\beta}{2},\frac{1}{1+n+n^2}\}.
\end{equation*} 
Then, there exists $v\in C^{1,\alpha}(\bar\Omega)$ and $w\in C^{1,\alpha}(\bar\Omega,\R^n)$ such that the following holds: \begin{gather}
    \|v-v^b\|_0\le\epsilon,\quad \|w-w^b\|_0\le\epsilon, \label{thm1.1}\\
    A=\frac{1}{2}\nabla v\otimes\nabla v+\sym\nabla w\quad \text{in }\Omega. \label{thm1.2}
\end{gather}
\end{thm}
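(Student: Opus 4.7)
The plan is to construct $(v,w)$ as the $C^{1,\alpha}$ limit of a Nash-Kuiper type sequence $\{(V_q,W_q)\}_{q\ge 0}$ with $(V_0,W_0)=(v^b,w^b)$, obtained by iterating the one-stage Proposition \ref{prop1}. At every stage I maintain the auxiliary deficit
\[
D_q := A - \tfrac12 \nabla V_q \otimes \nabla V_q - \sym \nabla W_q,
\]
which starts out $>\bar\tau\,\mathrm{Id}$ by hypothesis \eqref{902}, and arrange that $\|D_q\|_0\to 0$ while $D_q$ remains pointwise positive throughout. A frequency parameter $\lambda_{q+1}$ grows super-exponentially, and the inductive bookkeeping propagates, schematically,
\[
\|V_{q+1}-V_q\|_0+\|W_{q+1}-W_q\|_0 \lesssim \lambda_{q+1}^{-1},\qquad \|(V_{q+1},W_{q+1})\|_2 \lesssim \lambda_{q+1}\,\|D_q\|_0^{1/2},\qquad \|D_{q+1}\|_0 \lesssim \lambda_{q+1}^{-1}.
\]
Summability of $\sum_q \lambda_{q+1}^{-1}$ will yield the $C^0$-closeness \eqref{thm1.1} and convergence of the sequence, and interpolation between the $C^1$-boundedness and the controlled $C^2$-blow-up will deliver $C^{1,\alpha}$ convergence for $\alpha$ below the stated threshold.

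The heart of the argument is a single stage $(V_q,W_q)\mapsto (V_{q+1},W_{q+1})$, which I would split into $N_* = n(n+1)/2$ substeps. First, mollify the current pair at length scale $\ell\sim \lambda_{q+1}^{-1}$; the commutator estimate \eqref{phil2} controls the resulting change in the deficit. Then invoke the decomposition Lemma \ref{lem2} to write the smoothed deficit as a positive sum of rank-one pieces with smooth non-negative scalar amplitudes,
\[
D_q * \phi_\ell = \sum_{k=1}^{N_*} a_k^2\,\xi_k\otimes\xi_k,\qquad \xi_k\in\R^n\ \text{fixed unit vectors},
\]
which is where positive definiteness of $D_q$ is essential. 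The $k$-th substep cancels the $k$-th rank-one term by adding a corrugation along $\xi_k$ at frequency $\lambda=\lambda_{q+1}$,
\[
V\longmapsto V + \tfrac1\lambda\,\Gamma_1\bigl(a_k,\lambda\,\xi_k\cdot x\bigr),\qquad W\longmapsto W + \tfrac1\lambda\,\Gamma_2\bigl(a_k,\lambda\,\xi_k\cdot x\bigr)\,\xi_k \;+\; (\text{first-order correction in $W$}),
\]
where the correction in $W$ is chosen to cancel the cross term $\sym(\partial_t\Gamma_1\,\xi_k\otimes\nabla V)$. Expanding $\tfrac12\nabla V_{\text{new}}\otimes\nabla V_{\text{new}} + \sym\nabla W_{\text{new}}$, the $O(\lambda^0)$ contribution is precisely $a_k^2\,\xi_k\otimes\xi_k$ by virtue of the key identity \eqref{relation}, while the remaining terms are $O(\lambda^{-1})$ in $C^0$ by the bounds \eqref{bounds}. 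Running $k=1,\dots,N_*$ absorbs the entire mollified deficit and leaves only mollification and first-order errors, which is the content of Proposition \ref{prop1}.

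For the convergence I would select $\lambda_{q+1}=\lambda_q^{b}$ for an optimal $b>1$; interpolating the $C^1$- and $C^2$-estimates over the $N_*$ consecutive corrugations of each stage and demanding summability of the $C^{1,\alpha}$ increments yields the sharp threshold $\alpha<1/(1+2N_*)=1/(1+n+n^2)$. The main obstacle, and the reason the argument is delicate, is the simultaneous preservation of positivity of the deficit throughout a whole stage: the rank-one cancellations generate cross terms between distinct substeps, $\nabla a_k$-type errors from differentiating the amplitudes, and commutator errors from the mollification, each of which must stay strictly below the positivity margin inherited from $\bar\tau$. Controlling all of these at once is what forces both the step count $N_*=n(n+1)/2$ and the resulting exponent; a finer decomposition, as in the two-dimensional treatment of \cite{cao23c1frac13}, would be needed to push $\alpha$ beyond this Nash-Kuiper-type barrier.
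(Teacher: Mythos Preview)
Your outline has the right architecture, but there is a genuine gap in the single-stage construction: you add all $N_*$ corrugations at the \emph{same} frequency $\lambda_{q+1}$. This does not give a decaying deficit. After the first substep the Hessian of the updated $v$ already satisfies $\|\nabla^2 v_1\|_0\sim a_1\lambda_{q+1}\sim \|D_q\|_0^{1/2}\lambda_{q+1}$, so in the second substep the principal error term $\tfrac{1}{\lambda_{q+1}}\Gamma_1\nabla^2 v_1$ has size $\sim \|D_q\|_0^{1/2}\cdot \|D_q\|_0^{1/2}=\|D_q\|_0$, i.e.\ exactly the size of the old deficit, not $O(\lambda_{q+1}^{-1})$ as you claim. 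The paper's fix is to run the $N_*$ substeps at strictly increasing frequencies $\mu_0<\mu_1<\cdots<\mu_{N_*}=\lambda_{q+1}$, chosen geometrically as $\mu_i=\mu_0^{1-i/N_*}\mu_{N_*}^{i/N_*}$; then the $i$-th step error is $\|\E_i\|_0\lesssim \delta_{q+1}\,\mu_{i-1}/\mu_i$, which \emph{is} small. It is precisely this spread over $N_*$ frequency scales per stage that produces the threshold $1/(1+2N_*)$; with one frequency per stage you would get at best $\alpha<1/3$, and as written you get no decay at all.

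A second, related issue: Lemma~\ref{lem2} does not decompose arbitrary positive matrices with $N_*=\tfrac{n(n+1)}{2}$ terms; it requires $\|D-Id\|_0\le\sigma_*$. Merely keeping $D_q>0$ is insufficient, since $D_q\to 0$ while $Id$ does not. The paper builds this into the bookkeeping by defining the deficit with a shift,
\[
D_q:=A-\tfrac12\nabla V_q\otimes\nabla V_q-\sym\nabla W_q-\delta_{q+1}Id,
\]
and propagating $\|D_q\|_0\le\sigma\delta_{q+1}$, so that after mollification the rescaled matrix $\tilde D/\delta_{q+1}$ lies in the $\sigma_*$-neighbourhood of $Id$ where Lemma~\ref{lem2} applies. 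Equivalently, one must maintain a two-sided bound $D_q\sim\delta_{q+1}Id$, not just positivity. This also explains the preliminary Steps~1--2 in the paper (or the rescaling trick of Remark~\ref{rem999}): starting from $(v^b,w^b)$ the initial deficit $D^b$ need not be close to any multiple of $Id$, so one first runs a finite number of corrugations using Lemma~\ref{lem1} to bring the deficit into the regime where Lemma~\ref{lem2} and the quantitative iteration apply.
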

 For our purpose we only prove for $A\in C^{2,\kappa}(\bar\Omega,\S)$, and w.l.o.g we assume  $\bar\tau>1$. We divide its proof into four steps following \cite{cao19very}. The key is the one stage induction Proposition \ref{prop1} in Step 3.
 
\begin{rem}\label{rem999}
    Step 1-2 are preparations in nature,  both consisting of inductive constructions that are very similar to (and in fact easier than) Step 3. Besides, after finishing this note, we learned a trick from \cite[Section 4]{cao23c1frac13} that helps to bypass Step 1-2. So the readers may directly skip to Step 3. The trick goes as follows. Recall $A=(u+\tau)Id$. Fix
    \[
    \tau = (K+\sigma^{-1})(\|u\|_0+\|v^b\|_2^2+100),
    \]
    and let
    \[
    \bar A=\delta_1\tau^{-1} A,\quad V_0 = \delta_1^{\frac{1}{2}}\tau^{-1/2} v^b,\quad W_0 = 0.
    \]
    Then, replacing $A$ by $\bar A$, the assumptions \eqref{prop1.a1}-\eqref{prop1.a3} hold at $q=0$. By induction we get a solution $(v,w)$ to \eqref{1} with $A$ replaced by $\bar A$. Then $(\delta_1^{-\frac{1}{2}}\tau^{1/2}v,\delta_1^{-1}\tau w)$ is a solution to \eqref{1}.
\end{rem}

\textbf{Step 1 (Preparation): $(v^b,w^b)\to(\bar v_N,\bar w_N)$.}
In this step we aim to reduce $D^b$ to the neighborhood of identity matrix $Id$, i.e. to construct $(\bar v_i,\bar w_i),1\le i\le N$ inductively such that
\begin{gather}
    \|(\bar v_N,\bar w_N)-(v^b,w^b)\|_{0}\le \frac{\epsilon}{3}, \label{barvN}\\
    \|\bar D\|_0\le\frac{\sigma_*}{3} \label{barD},
\end{gather}
where $\sigma_*>0$ is given as in Lemma \ref{lem2} and
\begin{equation}
\bar D:=A-\frac{1}{2}\nabla\bar v_N\otimes\nabla\bar v_N-\sym\nabla\bar w_N-Id.
\end{equation}
Note that by \eqref{901}
\begin{equation}\label{903}
    \bar D=(D^b-Id) + (\frac{1}{2}\nabla v^b\otimes\nabla v^b+\sym\nabla w^b)-(\frac{1}{2}\nabla\bar v_N\otimes\nabla\bar v_N+\sym\nabla\bar w_N)
\end{equation}
The construction relies on the following lemma \ref{lem1}.
\begin{lem}[\cite{nash}, Lemma 1]\label{lem1}
    Given two constants $\Lambda>\lambda>0$, there exists constants $\Lambda_*>\lambda_*\ge 0$, a positive integer $N$, constant unit vectors $ \bar{\xi}_1,..., \bar{\xi}_N$ in $\R^n$ and $C^\infty$ functions $\bar d_1,...\bar d_N$ from $S_n$ to $\R$, depending only on $n,\lambda,\Lambda$, such that for any $D\in\S$ and
    \begin{equation*}
    \lambda Id<D<\Lambda Id
    \end{equation*}
    we have
    \begin{equation*}
    D=\sum_{i=1}^{N}\bar d_i^2 \bar{\xi}_i\otimes \bar{\xi}_i,\  \bar{d}_i= \bar{d}_i(D)\in [\lambda_*,\Lambda_*]\ \forall i.
    \end{equation*}
\end{lem}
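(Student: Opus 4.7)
The plan is to reduce the problem to a local decomposition around each positive-definite base point and then patch the local data together by a smooth partition of unity. Since the set $K := \{D \in \S : \lambda Id \le D \le \Lambda Id\}$ is compact, it suffices to construct, for every $D_0 \in K$, a neighborhood $U_{D_0} \subset \S$ of $D_0$, finitely many unit vectors $\xi_i^{D_0}$, and smooth strictly positive coefficient functions $c_i^{D_0}(D)$ on $U_{D_0}$ with $D = \sum_i c_i^{D_0}(D)\, \xi_i^{D_0} \otimes \xi_i^{D_0}$ throughout $U_{D_0}$.

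For the local step I diagonalize $D_0 = \sum_{i=1}^n \mu_i v_i \otimes v_i$ and take as local unit vectors the $n$ eigenvectors $v_i$ together with the $n(n-1)$ vectors $(v_i \pm v_j)/\sqrt{2}$ for $i < j$, $n^2$ vectors in total. A direct computation shows that the corresponding rank-one matrices span $\S$, since pairwise sums recover $v_i \otimes v_i + v_j \otimes v_j$ and differences recover $v_i \otimes v_j + v_j \otimes v_i$. Moreover, by placing a common small weight $\varepsilon < \lambda/(n-1)$ on each $(v_i \pm v_j)/\sqrt{2}$ rank-one piece and compensating with weight $\mu_i - \varepsilon(n-1) > 0$ on each $v_i \otimes v_i$, one obtains a strictly positive decomposition of $D_0$ itself. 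Since the linear map $T : \R^{n^2} \to \S$, $(c_i) \mapsto \sum_i c_i \xi_i^{D_0} \otimes \xi_i^{D_0}$, is surjective, it admits a linear right inverse $R$; defining $c_i^{D_0}(D)$ as the affine shift of the above positive decomposition by $R(D - D_0)$ then yields affine (in particular smooth) coefficients that remain strictly positive throughout a neighborhood of $D_0$.

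Next I cover $K$ by finitely many such neighborhoods $U_\alpha$, $\alpha = 1, \ldots, M$, and build a squared partition of unity by choosing smooth bump functions $\psi_\alpha$ compactly supported in $U_\alpha$ with $\sum_\beta \psi_\beta^2 > 0$ on $K$ and setting $\chi_\alpha := \psi_\alpha / \sqrt{\sum_\beta \psi_\beta^2}$. This gives $\sum_\alpha \chi_\alpha^2 \equiv 1$ on a neighborhood of $K$, with each $\chi_\alpha$ smooth there. Relabeling the pairs $(\alpha,i)$ as a single index running from $1$ to $N := M n^2$, set $\bar\xi_{(\alpha,i)} := \xi_i^\alpha$ and $\bar d_{(\alpha,i)}(D) := \chi_\alpha(D) \sqrt{c_i^\alpha(D)}$, extended by a further cutoff off a slightly larger neighborhood of $K$ so that the $\bar d_{(\alpha,i)}$ are defined smoothly on all of $\S$. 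Then
\[
\sum_{\alpha,i} \bar d_{(\alpha,i)}(D)^2\, \bar\xi_{(\alpha,i)} \otimes \bar\xi_{(\alpha,i)} = \sum_\alpha \chi_\alpha(D)^2 \sum_i c_i^\alpha(D)\, \xi_i^\alpha \otimes \xi_i^\alpha = \sum_\alpha \chi_\alpha(D)^2 D = D
\]
for every $D \in K$, and the upper bound $\Lambda_*$ follows by compactness of $K$ and continuity; the lower bound $\lambda_* = 0$ is permitted by the statement.

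The main technical obstacle is that the lemma requires each $\bar d_i$ itself to be $C^\infty$, not merely $\bar d_i^2$. A naive convex partition of unity $\rho_\alpha$ would produce smooth non-negative coefficients $\rho_\alpha c_i^\alpha$ whose square root is only H\"older regular where $\rho_\alpha$ vanishes to finite order. The squared partition-of-unity construction circumvents this precisely because $c_i^\alpha$ is bounded away from zero on $\operatorname{supp}\chi_\alpha$, so $\sqrt{c_i^\alpha}$ is smooth there, and the product $\chi_\alpha \sqrt{c_i^\alpha}$ extends by zero across $\partial U_\alpha$ to a globally smooth function, which is exactly the regularity of the coefficients demanded by the statement.
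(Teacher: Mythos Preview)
The paper does not supply its own proof of this lemma; it simply quotes the result from Nash and then applies it. So there is no in-paper argument to compare against.

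Your proposal is correct and is precisely the classical Nash construction: a local rank-one decomposition around each positive-definite base point with strictly positive affine coefficients, followed by gluing via a \emph{squared} partition of unity so that the square roots remain smooth. Your identification of the key technical point---that an ordinary partition of unity would ruin the $C^\infty$ regularity of $\bar d_i$ where the cutoff vanishes, while writing $\sum_\alpha \chi_\alpha^2=1$ with each $c_i^\alpha$ bounded away from zero on $\operatorname{supp}\chi_\alpha$ fixes this---is exactly the reason Nash's argument works. The only cosmetic remark is that your construction naturally yields $\lambda_*=0$ (some $\bar d_i$ vanish where the corresponding $\chi_\alpha$ does), which is explicitly allowed by the hypothesis $\lambda_*\ge 0$; the stronger version with $\lambda_*>0$ used in Lemma~\ref{lem2} requires the additional restriction to a small ball around $Id$ and is proved separately in the reference cited there.
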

\begin{proof}[Proof of \eqref{barvN}-\eqref{barD}.]
    Because of \eqref{902} and $\bar\tau>1$, we can apply  Lemma \ref{lem1} to $D^b-Id$, which yields
\begin{equation}\label{905}
D^b-Id=\sum_{i=1}^{N}\bar a_i^2 \bar{\xi}_i\otimes \bar{\xi}_i,\ \bar a_i\in [\lambda_*,\Lambda_*].
\end{equation}
Now we define, for $1\le i\le N$, 
\begin{align}
    &(\bar v_0,\bar w_0)=(v^b,w^b),\label{bar1}\\
    &\bar v_i=\bar v_{i-1}+\frac{1}{ \bar{\mu}_i}\Gamma_1( \bar{a}_i, \bar{\mu}_i x\cdot \bar{\xi}_i),\label{bar2}\\
    &\bar w_i=\bar w_{i-1}-\frac{1}{ \bar{\mu}_i}\Gamma_1( \bar{a}_i, \bar{\mu}_i x\cdot \bar{\xi}_i)\nabla\bar v_{i-1}+\frac{1}{ \bar{\mu}_i}\Gamma_2( \bar{a}_i, \bar{\mu}_i x\cdot \bar{\xi}_i) \bar{\xi}_i,\label{bar3}
\end{align}
where the frequencies $ \bar{\mu}_i>1$ are constants determined later. It will be clear later that the $i$-th step $(\bar v_{i-1},\bar w_{i-1})\to (\bar v_i,\bar w_i)$ cancels the term $ \bar{a}_i^2 \bar{\xi}_i\otimes \bar{\xi}_i$. Denote the $i$-th step error by
\begin{equation*}
    \bar{\E}_i:= \bar{a}_i^2 \bar{\xi}_i\otimes \bar{\xi}_i
    +(\frac{1}{2}\nabla\bar v_{i-1}\otimes\nabla\bar v_{i-1}+\sym\nabla \bar w_{i-1})
    -(\frac{1}{2}\nabla\bar v_i\otimes\nabla\bar v_i+\sym\nabla \bar w_i).
\end{equation*}
Then, combining \eqref{903} and \eqref{905}
\begin{equation*}
\bar D=\sum_{i=1}^{N}\bar\E_i.
\end{equation*}
We now estimate $\bar{\E}_i$. Using \eqref{bar2} and \eqref{bar3}, we get
\begin{align*}
\nabla\bar v_i&=\nabla\bar v_{i-1}+\frac{1}{\bar\mu_i}\nabla\Gamma_1,\\
\nabla\bar w_i&=\nabla\bar w_{i-1}-\frac{1}{\bar\mu_i}\nabla\Gamma_1\otimes\nabla\bar v_{i-1}-\frac{1}{\bar\mu_i}\Gamma_1\nabla^2\bar v_{i-1}+\frac{1}{\bar\mu_i}\nabla\Gamma_2\otimes\bar\xi_i,
\end{align*}
Here we write $\Gamma_1=\Gamma_1(\bar a_i,\bar\mu_i x\cdot\bar\xi_i),\Gamma_2=\Gamma_2(\bar a_i,\bar\mu_i x\cdot\bar\xi_i)$, hence
\begin{align*}
    \nabla\Gamma_1=(\partial_{s}\Gamma_1)\nabla\bar{a}_i+\bar{\mu}_i(\partial_{t}\Gamma_1) \bar{\xi}_i,\quad
    \nabla\Gamma_2=(\partial_{s}\Gamma_2)\nabla\bar{a}_i+\bar{\mu}_i(\partial_{t}\Gamma_2) \bar{\xi}_i.
\end{align*}
It follows that
\begin{align*}
    &(\frac{1}{2}\nabla\bar v_i\otimes\nabla\bar v_i+\sym\nabla\bar w_i)-(\frac{1}{2}\nabla\bar v_{i-1}\otimes\nabla\bar v_{i-1}+\sym\nabla\bar w_{i-1})\\
    &=-\frac{1}{ \bar{\mu}_i}\Gamma_1\nabla^2\bar v_{i-1}
    +\frac{1}{ \bar{\mu}_i}\sym(\nabla\Gamma_2\otimes \bar{\xi}_i)
    +\frac{1}{2 \bar{\mu}_i^2}\nabla\Gamma_1\otimes\nabla\Gamma_1\\
    &= -\frac{1}{ \bar{\mu}_i}\Gamma_1\nabla^2\bar v_{i-1}
    +\frac{1}{ \bar{\mu}_i}[\partial_{s}\Gamma_2+(\partial_{s}\Gamma_1)(\partial_{t}\Gamma_1)]\sym(\nabla  \bar{a}_i\otimes \bar{\xi}_i)\\
    &\quad +(\partial_{t}\Gamma_2+\frac{1}{2}|\partial_{t}\Gamma_1|^2) \bar{\xi}_i\otimes \bar{\xi}_i+\frac{1}{2 \bar{\mu}_i^2}|\partial_{s}\Gamma_1|^2\nabla  \bar{a}_i\otimes\nabla  \bar{a}_i
\end{align*}
Recalling \eqref{relation}, we have
\begin{equation}\label{relation'}
\partial_{t}\Gamma_2+\frac{1}{2}|\partial_{t}\Gamma_1|^2=\bar a_i^2,
\end{equation}
therefore
\begin{equation}
\begin{aligned}
    \bar\E_i
    &=\frac{1}{ \bar{\mu}_i}\Gamma_1\nabla^2\bar v_{i-1}
    -\frac{1}{ \bar{\mu}_i}[\partial_{s}\Gamma_2+(\partial_{s}\Gamma_1)(\partial_{t}\Gamma_1)]\sym(\nabla  \bar{a}_i\otimes \bar{\xi}_i)\\
    &\quad -\frac{1}{2 \bar{\mu}_i^2}|\partial_{s}\Gamma_1|^2\nabla  \bar{a}_i\otimes\nabla  \bar{a}_i\\
\end{aligned}
\end{equation}
Using \eqref{bounds}, it is easy to see
\begin{align*}
    \|\bar\E_i\|_0\le\frac{C(\bar v_{i-1}, \bar{a}_i)}{ \bar{\mu}_i},\\
    \|(\bar v_i-\bar v_{i-1},\bar w_i-\bar w_{i-1})\|_0\le\frac{C(\bar v_{i-1}, \bar{a}_i)}{ \bar{\mu}_i}.
\end{align*}
Choosing $\bar{\mu}_i$ large enough inductively, we can obtain
\begin{equation*}
    \|\bar{\E}_i\|_0\le \frac{\sigma_*}{3N},\quad
    \|(\bar v_i-\bar v_{i-1},\bar w_i-\bar w_{i-1})\|_0\le\frac{\epsilon}{3N}
\end{equation*}
where $\sigma_*>0$ is given in Lemma \ref{lem2} below. As a result
\begin{equation}\label{1002}
    \|\bar D\|_0\le\frac{\sigma_*}{3},\quad
    \|(\bar v_N-v^b,\bar w_N-w^b)\|_0\le\frac{\epsilon}{3}.
\end{equation}
\end{proof}
\textbf{Step 2 ($0$th approximate): $(\bar v_N,\bar w_N)\to(V_0,W_0)$}.
In this step we construct $0$th stage approximate solution $(V_0,W_0)$ such that the induction assumptions in Proposition \ref{prop1} holds at $q=0$. This relies on the following decomposition. The proof can be found in \cite[Lemma 5.2]{conti12hprinciple}. 
\begin{lem}\label{lem2}
    Let $N_*=\frac{1}{2}n(n+1)$ be the dimension of $\S$. Then there exist constants $\sigma_*\in(0,1/2), C_*>c_*>0$, unit vectors $ \xi_1,..., \xi_{N_*}$ in $\R^n$ and $C^\infty$ functions $d_1,...,d_{N_*}$ from $\S$ to $\R$, depending only on $n$, such that for any $D\in\S$ and
    \begin{equation*}
	\|D-Id\|_0\le\sigma_*,
    \end{equation*}
    we have
    \begin{equation*}
        D=\sum_{i=1}^{N_*}d_i^2 \xi_i\otimes \xi_i,\ d_i=d_i(D)\in[c_*,C_*],\ \forall i.
    \end{equation*}
\end{lem}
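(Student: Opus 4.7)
The proof has a natural two-part structure. The key observation is that $N_* = \frac{1}{2}n(n+1)$ is exactly the dimension of $\mathcal{S}_n$, so what we are really seeking is a basis of $\mathcal{S}_n$ consisting of rank-one symmetric matrices $\xi_i \otimes \xi_i$, with the additional constraint that the identity matrix sits strictly inside the positive cone generated by this basis. Once such a configuration is fixed, the functions $d_i(D)$ arise automatically from the (linear) coordinate map together with a square root, and the smallness constraint $\|D - Id\|_0 \le \sigma_*$ is there only to keep us inside the region where the coordinates stay positive.

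\textbf{Step 1: choosing $\xi_1,\ldots,\xi_{N_*}$.} I would first exhibit unit vectors $\xi_1,\ldots,\xi_{N_*}\in\R^n$ such that $\{\xi_i\otimes\xi_i\}_{i=1}^{N_*}$ is a basis of $\mathcal{S}_n$ and such that the (unique) expansion $Id = \sum_{i=1}^{N_*} c_i^0\,\xi_i\otimes\xi_i$ has $c_i^0 > 0$ for every $i$. The naive choice $\{e_i\otimes e_i\}_i\cup\{\frac{1}{\sqrt{2}}(e_i+e_j)\otimes\frac{1}{\sqrt{2}}(e_i+e_j)\}_{i<j}$ does span $\mathcal{S}_n$ but places $Id$ on the boundary of the positive cone (the off-diagonal generators get zero weight). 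So instead I would use a more symmetric configuration: for $n=2$ the three directions $e_1$ and $\tfrac{1}{2}(e_1\pm\sqrt 3 e_2)$ (vertices of an equilateral triangle on $S^1$) yield the positive expansion $Id=\tfrac{2}{3}\sum_{i=1}^3 \xi_i\otimes\xi_i$; for general $n$ an analogous ``simplex-type'' choice works. Alternatively one observes that the set of admissible tuples is a non-empty open subset of $(S^{n-1})^{N_*}$ (both the basis property and the positivity $c_i^0 > 0$ are open conditions), which gives the existence abstractly once one verifies non-emptiness in low dimension.

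\textbf{Step 2: constructing $d_i$ and choosing $\sigma_*$.} With the vectors fixed, the linear map $\Phi:\R^{N_*}\to\mathcal{S}_n$, $\Phi(a)=\sum_i a_i\,\xi_i\otimes\xi_i$ is an isomorphism, so its inverse $D\mapsto (a_1(D),\ldots,a_{N_*}(D))$ consists of linear, in particular $C^\infty$, functionals on $\mathcal{S}_n$. By Step 1, $a_i(Id)=c_i^0>0$, so by continuity there is some $\sigma_*\in(0,1/2)$ such that $a_i(D)\ge \tfrac{1}{2}c_i^0$ whenever $\|D-Id\|_0\le \sigma_*$. I would then define $d_i(D):=\sqrt{a_i(D)}$, which is smooth on this neighbourhood since $a_i(D)$ stays bounded away from $0$. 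The uniform bounds $c_*\le d_i(D)\le C_*$ follow from compactness of the closed ball $\{D:\|D-Id\|_0\le\sigma_*\}$ together with continuity of $d_i$, and the identity $D=\sum_i d_i(D)^2\,\xi_i\otimes\xi_i$ holds by definition of $\Phi$.

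\textbf{Expected obstacle.} The only genuinely non-routine point is the positivity requirement in Step 1: one must simultaneously achieve the basis property (a generic, open condition) and strict positivity of the coefficients of $Id$ (another open condition, but incompatible with the most obvious basis). Once a single explicit configuration like the simplex one above is written down, everything else is linear algebra plus elementary continuity, and in particular the smoothness of $d_i$ is immediate because the coordinate functions $a_i$ are linear in $D$.
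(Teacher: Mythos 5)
Your strategy is exactly the one behind the result the paper defers to for this lemma (\cite[Lemma 5.2]{conti12hprinciple}): fix unit vectors $\xi_1,\dots,\xi_{N_*}$ so that $\{\xi_i\otimes\xi_i\}$ is a basis of $\S$ in which $Id$ has strictly positive coordinates, invert the linear coordinate map $\Phi$, and set $d_i=\sqrt{a_i}$ on a ball around $Id$ where the coordinates stay uniformly positive. Your Step 2 is complete and correct; the issue is Step 1, and it is precisely the point you yourself single out as non-routine but then do not close.

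Concretely: the explicit configuration is only exhibited for $n=2$. ``An analogous simplex-type choice'' is not a construction for $n\ge 3$ (a regular simplex in $\R^n$ has $n+1$ vertices, not $N_*=n(n+1)/2$), and the openness remark cannot substitute for existence --- openness of the admissible set is useless without a point in it, and verifying non-emptiness ``in low dimension'' says nothing about general $n$. The difficulty is structural, not just a matter of finding the right formula: any candidate basis that contains all of $e_1\otimes e_1,\dots,e_n\otimes e_n$ (or, more generally, a basis of a subspace containing $Id$) necessarily assigns coordinate $0$ to the remaining generators when expanding $Id$, since coordinates with respect to a basis are unique; this is why the ``naive'' choice fails and why naive perturbations of it keep failing. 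The standard way to close the gap is: the set $\{\xi\otimes\xi:\xi\in S^{n-1}\}$ spans $\S$ and its convex cone is the cone of positive semidefinite matrices, of which $Id$ is an interior point (e.g.\ because $\frac{n}{|S^{n-1}|}\int_{S^{n-1}}\xi\otimes\xi\,d\xi=Id$); a Carath\'eodory-type selection for points in the interior of a finitely generated cone then yields $N_*$ linearly independent matrices $\xi_i\otimes\xi_i$ with $Id=\sum_i\lambda_i\,\xi_i\otimes\xi_i$ and all $\lambda_i>0$. With that existence statement supplied, the remainder of your argument goes through verbatim and matches the cited proof.
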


We construct $(V_0,W_0)$ as follows. Let
\begin{equation}\label{ineq01}
    \delta_1\le\frac{\sigma_*}{3}
\end{equation}
be a constant given by $\delta_1=a^{-b}$ as in Step 3. Then by \eqref{1002}
\begin{equation*}
    \|\bar D-\delta_1 Id\|_0\le\sigma_*,
\end{equation*}
thus by Lemma \ref{lem2}
\begin{equation*}
    \bar D-\delta_1 Id=\sum_{i=1}^{N_*}\hat{a}_i^2 \xi_i\otimes \xi_i,
\end{equation*}
where 
\begin{equation}
    \|\hat a_i\|_k\lesssim 1,\ \forall 0\le k\le 3.
\end{equation}
with implicit constants depending on $v^b,w^b$. Similar to Step 1, we define
\begin{align}
    &( \hat v_0, \hat w_0)=(\bar v_N, \bar w_N),\\
    & \hat{v}_i= \hat{v}_{i-1}+\frac{1}{\hat{\mu}_i}\Gamma_1(\hat{a}_i, \hat{\mu}_i x\cdot \xi_i),\\
    & \hat{w}_i= \hat{w}_{i-1}-\frac{1}{\hat{\mu}_i}\Gamma_1(\hat{a}_i,\hat{\mu}_i x\cdot \xi_i)\nabla \hat{v}_{i-1}+\frac{1}{\hat{\mu}_i}\Gamma_2(\hat{a}_i,\hat{\mu}_i x\cdot \xi_i) \xi_i, \\
    &(V_0,W_0)=(\hat v_{N_*},\hat w_{N_*}),
\end{align}
with constants $1\ll\hat{\mu}_1<...<\hat{\mu}_{N_*}$ determined later. The $i$-th step error is
\begin{equation*}
\begin{aligned}
    \hat{\E}_i
    &:=\hat{a}_i^2 \xi_i\otimes \xi_i
    +(\frac{1}{2}\nabla  \hat{v}_{i-1}\otimes\nabla  \hat{v}_{i-1}+\sym\nabla   \hat{w}_{i-1})
    -(\frac{1}{2}\nabla  \hat{v}_i\otimes\nabla  \hat{v}_i+\sym\nabla   \hat{w}_i)\\
    & =\frac{1}{ \hat{\mu}_i}\Gamma_1\nabla^2  \hat{v}_{i-1}
    -\frac{1}{ \hat{\mu}_i}[\partial_{s}\Gamma_2+(\partial_{s}\Gamma_1)(\partial_{t}\Gamma_1)]\sym(\nabla \hat{a}_i\otimes \xi_i)\\
    & -\frac{1}{2\hat{\mu}_i^2}|\partial_{s}\Gamma_1|^2\nabla \hat{a}_i\otimes\nabla \hat{a}_i,
\end{aligned}
\end{equation*}
and by \eqref{881}
\begin{equation*}
     D_0=\sum_{i=1}^{N_*}\hat\E_i.
\end{equation*}
Unlike before, we estimate $\hat{\E}_i$ more carefully.
One can see that
\begin{align*}
    \|\nabla^k \hat{v}_i-\nabla^k \hat{v}_{i-1}\|_0\lesssim\hat{\mu}_i^{k-1},\ 0\le k\le 3,\\
    \|\nabla^k \hat{w}_i-\nabla^k \hat{w}_{i-1}\|_0\lesssim\hat{\mu}_i^{k-1},\ 0\le k\le 2.
\end{align*}
Besides $\|(\hat{v}_0,\hat{w}_0)\|_3\le C$ for some $C$ depending on $v^b,w^b$. Thus
\begin{gather*}
\|(\hat{v}_i,\hat{w}_i)\|_1\lesssim 1,\\
\|(\hat{v}_i,\hat{w}_i)\|_2\lesssim 1+\hat{\mu}_1+...+\hat{\mu}_i\lesssim\hat{\mu}_i.
\end{gather*}
Hence we get
\begin{equation*}\label{hatEi}
    \|\hat{\E}_i\|_0
    \lesssim\frac{1}{\hat{\mu}_i}\|\nabla^2 \hat{v}_{i-1}\|_0+\frac{1}{\hat{\mu}_i}+\frac{1}{\hat{\mu}_i^2}
    \lesssim \frac{\hat{\mu}_{i-1}}{\hat{\mu}_i}
\end{equation*}
where we write $\hat{\mu}_0=1$. It follows that
\begin{gather*}
    \| D_0\|_0\le\sum_{i=1}^{N_*}\|\hat\E_i\|_0\lesssim \frac{1}{\hat{\mu}_1}+\frac{\hat{\mu}_1}{\hat{\mu}_2}+...+\frac{\hat{\mu}_{N_*-1}}{\hat{\mu}_{N_*}},\\
    \|(V_0,W_0)\|_2\lesssim \hat{\mu}_{N_*}.
\end{gather*}
We are ready to determine $\hat{\mu}_i$. Let
\begin{equation*}
    \hat{\mu}_i:=(\frac{C}{\sigma\delta_1})^i.
\end{equation*}
By choosing $C>1$ large enough, and then $K>1$ large enough, we get
\begin{gather}
    \| D_0\|_0\le {\sigma}\delta_1,\\
    \|(V_0,W_0)\|_2\le C(\sigma\delta_1)^{-N_*}\le K\delta_1^{-N_*}.\label{K3}
\end{gather}
where $\sigma>0$ and $K>1$ are universal constants as in Proposition \ref{prop1}. We now require
\begin{equation}\label{ineq*}
    \delta_1^{-N_*}\le \delta_0^\frac{1}{2}\lambda_0,
\end{equation}
hence
\begin{gather}
    \| D_0\|_0\le {\sigma}\delta_1, \label{011}\\
    \|(V_0,W_0)\|_2\le K\delta_0^\frac{1}{2}\lambda_0.\label{012}
\end{gather}
Besides, if $K>1$ is large enough
\begin{align}
    &\|(V_0-\bar v_N,W_0-\bar w_N)\|_0\lesssim \frac{1}{\hat\mu_1}+...+\frac{1}{\hat\mu_N}\le\sigma\delta_1,\label{013}\\
    &\|(V_0,W_0)\|_1=\|(\hat v_{N_*},\hat w_{N_*})\|_1\lesssim 1\le \frac{\sqrt{K}}{2}\label{014}.
\end{align}
\textbf{Step 3 (Induction): $(V_q,W_q)\to( V_{q+1},W_{q+1})$.}
We are ready to do induction. Set $a>1,1<b<2,c>0$ as parameters. Take a sequence of (small) amplitudes
\begin{equation}
    \delta_q:=a^{-b^q}
\end{equation}
and a sequence of (large) frequencies
\begin{equation}
    \lambda_q:=a^{cb^q}
\end{equation}
Denote the $q$-th stage deficit matrix by
\begin{equation}\label{881}
   D_q:= A-\frac{1}{2}\nabla V_q\otimes\nabla V_q-\sym\nabla W_q-\delta_{q+1}Id.
\end{equation}
We need the following one stage induction:
\begin{prop}\label{prop1}
    There exist some positive universal constants $\sigma\in(0,\sigma_*/3)$ and $K>1$, such that: if $(V_q,W_q)\in C^2(\bar\Omega)\times C^2(\bar\Omega,\R^n)$ satisfy
    \begin{gather}
         \|(V_q,W_q)\|_1\le \sqrt{K}, \label{prop1.a1}\\
         \|(V_q,W_q)\|_2\le K\delta_q^\frac{1}{2}\lambda_q, \label{prop1.a2}\\
         \|D_q\|_0\le{\sigma}\delta_{q+1}. \label{prop1.a3}
    \end{gather}
    we can construct $(V_{q+1},W_{q+1})\in C^2(\bar\Omega)\times C^2(\bar\Omega,\R^n)$ with
    \begin{gather}
        \|(V_{q+1}-V_q, W_{q+1}-W_q)\|_1\le K\delta_{q+1}^\frac{1}{2}, \label{prop1.0}\\
        \|(V_{q+1},W_{q+1})\|_2\le K\delta_{q+1}^\frac{1}{2}\lambda_{q+1},\label{prop1.1}\\
         \|D_{q+1}\|_0\le{\sigma}\delta_{q+2}.\label{prop1.2}
    \end{gather}
\end{prop}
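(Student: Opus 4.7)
The plan is to execute a single Nash--Kuiper stage, following the same template as the preparatory Step 1 and Step 2: mollify $(V_q,W_q)$ to regain $C^2$-control, decompose the resulting mollified deficit into $N_* = n(n+1)/2$ rank-one symmetric tensors via Lemma \ref{lem2}, and then successively cancel those tensors using $N_*$ corrugations built from $\Gamma_1,\Gamma_2$.

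First I would fix a mollification length $\ell_q$ of the schematic form $\ell_q \sim \lambda_{q+1}^{-1}(\lambda_{q+1}/\lambda_q)^{1/N_*}$, extend $(V_q,W_q)$ smoothly to a neighborhood of $\bar\Omega$, and set $(\tilde V_q,\tilde W_q) := (V_q,W_q) * \phi_{\ell_q}$ together with $\tilde A := A*\phi_{\ell_q}$. The mollification estimates \eqref{phil3}--\eqref{phil2}, combined with the induction assumptions \eqref{prop1.a1}--\eqref{prop1.a2}, yield $\|\tilde V_q\|_1,\|\tilde W_q\|_1 \lesssim 1$, $\|\tilde V_q\|_2, \|\tilde W_q\|_2 \lesssim \delta_q^{1/2}\lambda_q$ with higher derivatives penalized by powers of $\ell_q^{-1}$, and a commutator bound showing that the mollified deficit $\tilde D_q := \tilde A - \tfrac12\nabla\tilde V_q\otimes\nabla\tilde V_q - \sym\nabla\tilde W_q - \delta_{q+1}Id$ satisfies $\|\tilde D_q\|_0 \le 2\sigma\delta_{q+1}$ once $\ell_q$ is chosen small enough.

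Next, set $H_q := \tilde D_q + (\delta_{q+1} - \delta_{q+2})Id$, which is the matrix increment that the new corrugations must produce. Because $\delta_{q+2}/\delta_{q+1} = \delta_{q+1}^{b-1}$ is small, $H_q/(\delta_{q+1}-\delta_{q+2})$ lies inside the $\sigma_*$-neighborhood of $Id$, and Lemma \ref{lem2} gives a decomposition
\[
H_q = \sum_{i=1}^{N_*} a_i^2\,\xi_i\otimes\xi_i,\qquad a_i = (\delta_{q+1}-\delta_{q+2})^{1/2}\,d_i\!\left(\tfrac{H_q}{\delta_{q+1}-\delta_{q+2}}\right) \sim \delta_{q+1}^{1/2},
\]
with $C^k$-norms of $a_i$ controlled by $\delta_{q+1}^{1/2}\ell_q^{-k}$. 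Then I construct $(v_i,w_i)_{i=0}^{N_*}$ starting from $(v_0,w_0) := (\tilde V_q,\tilde W_q)$ using exactly the corrugation recipe of Step 1--2 with frequencies $\mu_0 \sim \ell_q^{-1} \ll \mu_1 < \cdots < \mu_{N_*} = \lambda_{q+1}$, and define $(V_{q+1},W_{q+1}) := (v_{N_*},w_{N_*})$. The telescoping identity together with the key relation \eqref{relation'} shows that
\[
A - \tfrac12\nabla V_{q+1}\otimes\nabla V_{q+1} - \sym\nabla W_{q+1} - \delta_{q+2}Id = (A-\tilde A) + \sum_{i=1}^{N_*}\E_i,
\]
and each $\E_i$ has the explicit structure already computed in Step 1--2, controlled by \eqref{bounds}.

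The main obstacle, and the place where the exponent $1/(1+n+n^2)$ gets pinned down, is the simultaneous calibration of the parameters $a$, $b$, $c$, $\ell_q$ and the geometric ratio $r := (\mu_i/\mu_{i-1})$, so that the three induction estimates \eqref{prop1.0}--\eqref{prop1.2} close. The $C^1$ and $C^2$ bounds \eqref{prop1.0}--\eqref{prop1.1} follow readily from $|\partial_t^k\Gamma_1|\lesssim a_i$, $|\partial_t^k\Gamma_2|\lesssim a_i^2$ and $\mu_{N_*}=\lambda_{q+1}$, provided $K$ is chosen large. The tight constraint comes from \eqref{prop1.2}: with the geometric choice of frequencies the dominant contribution to $\|\E_i\|_0$ is of order $\mu_{i-1}\delta_{q+1}/\mu_i = \delta_{q+1}/r$, so $\sum_i\|\E_i\|_0 \le \tfrac{\sigma}{2}\delta_{q+2}$ forces $r \gtrsim \delta_{q+1}/\delta_{q+2}$, i.e. $\lambda_{q+1}/\mu_0 \gtrsim (\delta_{q+1}/\delta_{q+2})^{N_*}$. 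Combined with the $C^{1,\alpha}$ summability requirement $\delta_{q+1}^{1/2}\lambda_{q+1}^{\alpha}\to 0$, controlling $\|A-\tilde A\|_0$ by $\ell_q^{2+\kappa}$, and letting $b\to 1^+$, one is led to $\alpha < 1/(1+2N_*) = 1/(1+n+n^2)$, which is precisely the exponent appearing in Theorems \ref{mainthm2}--\ref{mainthm}. Once the parameters are chosen compatibly, \eqref{prop1.0}--\eqref{prop1.2} follow by routine checking, and $(V_{q+1},W_{q+1})$ is indeed the desired next-stage approximation.
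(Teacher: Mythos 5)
Your proposal follows essentially the same route as the paper's proof: mollify $(A,V_q,W_q)$ at a scale dictated by the commutator estimate \eqref{phil2} and $\|(V_q,W_q)\|_2$, apply Lemma \ref{lem2} to the rescaled mollified deficit, run $N_*$ corrugations with geometrically interpolated frequencies ending at $\mu_{N_*}=\lambda_{q+1}$, and close \eqref{prop1.0}--\eqref{prop1.2} via the constraint $\mu_0/\mu_{N_*}\le(\sigma\delta_{q+2}/(C\delta_{q+1}))^{N_*}$. Two small slips: your schematic $\ell_q\sim\lambda_{q+1}^{-1}(\lambda_{q+1}/\lambda_q)^{1/N_*}$ gives $\ell_q^{-1}\sim\mu_{N_*-1}$, contradicting your own (correct) requirement $\ell_q^{-1}\sim\mu_0\ll\mu_1$ — the paper takes $\ell_q\sim\sigma\delta_{q+1}^{1/2}\delta_q^{-1/2}\lambda_q^{-1}$ — and a non-negative mollifier only yields $\|A-\tilde A\|_0\lesssim\ell_q^{2}$, not $\ell_q^{2+\kappa}$ (the paper in fact only needs $\|A-\tilde A\|_0\lesssim\ell_q\le\sigma\delta_{q+2}/C$).
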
 
\begin{proof}
In the proof all implicit (universal) constants are independent of $\sigma,K$. For convenience we write
\begin{equation}\label{mu0}
    \mu_0:=K\delta_{q+1}^{-\frac{1}{2}}\delta_q^\frac{1}{2}\lambda_q,
\end{equation}
thus
\begin{equation}\label{prop1.a2'}
    \|(V_q,W_q)\|_2\le \delta_{q+1}^\frac{1}{2}\mu_0.
\end{equation}
Let $\tilde A=A*\phi_l,\tilde v=V_q*\phi_l,\tilde w=W_q*\phi_l$ be the mollification of $A,V_q,W_q$, respectively, at length $l$. 
\begin{rem}
    Unlike before, here we need mollification to avoid the dependence on uncontrolled higher derivatives of $(V_q,W_q)$, see Remark \ref{rem998}. Mollification is usually done in the entire space $\R^n$. To this end we actually first extend $v^b,f$ to $C_c(\tilde\Omega)$ for some $\tilde\Omega\Supset\Omega$. and then solve \eqref{1} in $\tilde\Omega$. This does not affect the arguments in the next section, since we cut-off everything there inside $\Omega$.

\end{rem}
Denote
\begin{equation*}
    \tilde D := \tilde A-\frac{1}{2} \nabla\tilde v\otimes\nabla\tilde v-\sym\nabla\tilde w-\delta_{q+2}Id.
\end{equation*}
Then by \eqref{881}
\begin{equation*}
    \tilde D-\delta_{q+1}Id= D_q*\phi_l+\frac{1}{2}[(\nabla V_q\otimes\nabla V_q)*\phi_l-\nabla\tilde v\otimes\nabla\tilde v]-\delta_{q+2}Id.
\end{equation*}
Assume
\begin{equation}\label{ineq02}
    \delta_{q+2}\le {\sigma}\delta_{q+1},
\end{equation}
and injecting \eqref{phil3} \eqref{phil2} and assumptions \eqref{prop1.a1} \eqref{prop1.a2'} \eqref{prop1.a3}, we see
\begin{align*}
    \|\tilde D-\delta_{q+1}Id\|_0
    &\le \|D_q*\phi_l\|_0+\frac{1}{2}\|(\nabla V_q\otimes\nabla V_q)*\phi_l-\nabla\tilde v\otimes\nabla\tilde v\|_0+\delta_{q+2}\\
    &\le \|D_q\|_0 + Cl^2\|V_q\|_2^2 + \delta_{q+2}\\
    &\le 2{\sigma}\delta_{q+1}+C(\mu_0 l)^2\delta_{q+1}
\end{align*}
Now we take
\begin{equation}\label{l}
    l:=\frac{\sigma}{C\mu_0} = \frac{\sigma}{CK}\delta_{q+1}^\frac{1}{2}\delta_q^{-\frac{1}{2}}\lambda_q^{-1}
\end{equation}
for some constant $C>1$ large enough, independent of $\sigma$ and $K$,  
such that
\begin{equation*}
    \|\tilde D-\delta_{q+1}Id\|_0\le 3\sigma\delta_{q+1},
    \quad\text{i.e.}\quad
    \|\frac{\tilde D}{\delta_{q+1}}-Id\|_0\le 3\sigma\le\sigma_*.
\end{equation*}
Similarly 
\begin{equation*}
    \|\tilde D-\delta_{q+1}Id\|_k \le C\sigma\delta_{q+1}l^{-k},\quad\forall 1\le k\le 3.
\end{equation*}
Now Lemma \ref{lem2} applies to $\frac{\tilde D}{\delta_{q+1}}$, yielding
\begin{equation}\label{882}
    \tilde D=\sum_{i=1}^{N_*}a_i^2\xi_i\otimes\xi_i,\quad\text{i.e.}\quad
    \tilde A-\frac{1}{2} \nabla\tilde v\otimes\nabla\tilde v-\sym\nabla\tilde w-\delta_{q+2}Id=\sum_{i=1}^{N_*}a_i^2\xi_i\otimes\xi_i,
\end{equation}
where
\begin{gather}
    \|a_i\|_0\lesssim\delta_{q+1}^\frac{1}{2}, \label{721}\\
    \|\nabla^k a_i\|_0\lesssim\delta_{q+1}^\frac{1}{2}\|\frac{\tilde D}{\delta_{q+1}}-Id\|_k\lesssim \sigma\delta_{q+1}^\frac{1}{2}l^{-k},\ \forall 1\le k\le 3. \label{722}
\end{gather}
Now we define, for $1\le i\le N_*$
\begin{align}
    &(v_0, w_0)=(\tilde v, \tilde w)\\
    & v_i= v_{i-1}+\frac{1}{\mu_i}\Gamma_1(a_i, \mu_i x\cdot \xi_i),\\
    & w_i= w_{i-1}-\frac{1}{\mu_i}\Gamma_1(a_i,\mu_i x\cdot \xi_i)\nabla v_{i-1}+\frac{1}{\mu_i}\Gamma_2(a_i,\mu_i x\cdot \xi_i) \xi_i,\\
    & (V_{q+1},W_{q+1})=(v_{N_*},w_{N_*}),
\end{align}
with constants \begin{equation*}
\mu_0\le\mu_1\le...\le\mu_{N_*}=\lambda_{q+1}
\end{equation*}
determined later. Using \eqref{phil1} \eqref{phil2} and \eqref{prop1.a2'}, we see
\begin{gather}
    \|(v_0-V_q,w_0-W_q)\|_1\lesssim \|(V_q.W_q)\|_2 l\lesssim \delta_{q+1}^\frac{1}{2}\mu_0 l\lesssim \sigma\delta_{q+1}^\frac{1}{2},\label{401}\\
    \|v_0\|_{2+k}\lesssim \|V_q\|_2 l^{-k}\lesssim \delta_{q+1}^\frac{1}{2}\mu_0 l^{-k},\ k=0,1. \label{402}
\end{gather}
We first estimate $v_i$ as follows: by  \eqref{bounds} and \eqref{721} \eqref{722}, for $0\le k\le 3$ there holds
\begin{equation}\label{403}
\begin{aligned}
    \|v_i- v_{i-1}\|_k
    &\lesssim \frac{1}{\mu_i}(\mu_i^k\|a_i\|_0+\mu_i^{k-1}\|a_i\|_1+...+\|a_i\|_k)\\
    &\lesssim\frac{1}{\mu_i}(\mu_i^k\delta_{q+1}^\frac{1}{2}+\mu_i^{k-1}\cdot\sigma\delta_{q+1}^\frac{1}{2}l^{-1}+...+\sigma\delta_{q+1}^\frac{1}{2}l^{-k})\\
    &\lesssim \delta_{q+1}^\frac{1}{2}\mu_i^{k-1}
\end{aligned}
\end{equation}
where we assumed $l^{-1}\le\mu_1$, namely
\begin{equation}\label{ineq03}
    \frac{C\mu_0}{\sigma}\le\mu_1\le...\le\mu_{N_*}=\lambda_{q+1}.
\end{equation}
As a byproduct
\begin{gather*}
    \|\nabla v_i\|_0\le \|\nabla V_q\|_0+C\delta_{q+1}^\frac{1}{2}\le \sqrt{K}+C\lesssim\sqrt{K},\\
    \|\nabla^2 v_i\|_0\lesssim \delta_{q+1}^\frac{1}{2}(\mu_0+\mu_1+...+\mu_i)\lesssim\delta_{q+1}^\frac{1}{2}\mu_i,\\
    \|\nabla^3 v_0\|_0\lesssim \delta_{q+1}^\frac{1}{2}\mu_0 l^{-1},\\
    \|\nabla^3 v_i\|_0\lesssim \delta_{q+1}^\frac{1}{2}\mu_0 l^{-1}+\delta_{q+1}^\frac{1}{2}(\mu_1^2+...+\mu_i^2)\lesssim \delta_{q+1}^\frac{1}{2}\mu_i^2.
\end{gather*}
These will be used to estimate $w_i$ and $\E_i$.

Now turn to $w_i$. By \eqref{bounds} \eqref{721} \eqref{722} and above bounds for $v_{i-1}$,  we have
\begin{equation}\label{404}
\begin{aligned}
    \|w_i-w_{i-1}\|_1
    &\lesssim \frac{1}{\mu_i}(\|a_i\|_0\|\nabla v_{i-1}\|_0\mu_i+\|a_i\|_1\|\nabla v_{i-1}\|_0+\|a_i\|_0\|\nabla v_{i-1}\|_1)\\
    &\quad +\frac{1}{\mu_i}(\|a_i^2\|_0\mu_i+\|a_i^2\|_1)\\
    &\lesssim  \frac{1}{\mu_i}(\delta_{q+1}^\frac{1}{2} \cdot\sqrt{K}\mu_i+\delta_{q+1}^\frac{1}{2}l^{-1}\cdot\sqrt{K}+\delta_{q+1}^\frac{1}{2}\cdot \delta_{q+1}^\frac{1}{2}\mu_{i-1})\\
    &\quad +\frac{1}{\mu_i}(\delta_{q+1}\mu_i+\delta_{q+1}l^{-1})\\
    &\lesssim\sqrt{K}\delta_{q+1}^\frac{1}{2}.
\end{aligned}
\end{equation}
Similarly 
\begin{equation}\label{405}
    \|w_i-w_{i-1}\|_2\lesssim \sqrt{K}\delta_{q+1}^\frac{1}{2}\mu_i.
\end{equation}
\begin{rem}\label{rem998}
Although to solve \eqref{4} only requires $W_q$ converges in $C^1$ sense, \eqref{405} is necessary: note that to estimate $\|\nabla^2 a_i\|_0$ used in \eqref{403}, we need to mollify $W_q$, which produces an error $\|w_0-W_q\|_1$, hence the bound of $\|W_q\|_2$, in particular \eqref{405}, is unavoidable. 
\end{rem}
By \eqref{401} \eqref{403} \eqref{404}
\begin{equation}\label{K6}
\begin{aligned}
    &\|(V_{q+1}-V_q, W_{q+1}-W_q)\|_1\\
    &\lesssim \|(v_0,w_0)-(V_q,W_q)\|_1+\sum_{i=1}^{N_*}(\|v_i-v_{i-1}\|_1+\|w_i-w_{i-1}\|_1)\\
    &\lesssim\sqrt{K}\delta_{q+1}^\frac{1}{2}
    \le K\delta_{q+1}^\frac{1}{2},
\end{aligned}
\end{equation}
and similarly by \eqref{402} \eqref{403} \eqref{405}
\begin{equation}\label{K2}
    \|(V_{q+1}-V_q, W_{q+1}-W_q)\|_2\lesssim \sqrt{K}\delta_{q+1}^\frac{1}{2}(\mu_0+...+\mu_{N_*})
    \le K\delta_{q+1}^\frac{1}{2}\lambda_{q+1},
\end{equation}
as long as $K>1$ is large enough. This verifies \eqref{prop1.0} \eqref{prop1.1}. 

It remains to prove \eqref{prop1.2}. The $i$-th step error is
\begin{align*}
    {\E}_i
    &={a}_i^2 \xi_i\otimes \xi_i
    +(\frac{1}{2}\nabla  {v}_{i-1}\otimes\nabla  {v}_{i-1}+\sym\nabla   {w}_{i-1})
    -(\frac{1}{2}\nabla  {v}_i\otimes\nabla  {v}_i+\sym\nabla   {w}_i)\\
    & =\frac{1}{ {\mu}_i}\Gamma_1\nabla^2  {v}_{i-1}
    -\frac{1}{ {\mu}_i}(\partial_{s}\Gamma_2+(\partial_{s}\Gamma_1)(\partial_{t}\Gamma_1))\sym(\nabla {a}_i\otimes \xi_i)\\
    & -\frac{1}{2 {\mu}_i^2}|\partial_{s}\Gamma_1|^2\nabla {a}_i\otimes\nabla {a}_i.
\end{align*}
Then by \eqref{881} \eqref{882}
\begin{equation}
    D_{q+1}=A-\tilde A+\sum_{i=1}^{N_*}\E_i.
\end{equation}
Recalling \eqref{bounds}, one obtains
\begin{equation}
\begin{aligned}
    \|\E_i\|_0
    &\lesssim\frac{1}{\mu_i}\|\Gamma_1\|_0\|\nabla^2 v_i\|_0 +\frac{1}{\mu_i}(\|\partial_s\Gamma_2\|_0+\|\partial_s\Gamma_1\|_0\|\partial_t\Gamma_1\|_0)\|\nabla a_i\|_0\\
    &+\frac{1}{\mu_i^2}\|\partial_s\Gamma_1\|_0^2\|\nabla a_i\|_0^2\\
    &\lesssim\frac{1}{\mu_i}\delta_{q+1}^\frac{1}{2}\cdot \delta_{q+1}^\frac{1}{2}\mu_{i-1}
    + \frac{1}{\mu_i}\cdot\delta_{q+1}^\frac{1}{2}\cdot\sigma\delta_{q+1}^\frac{1}{2}l^{-1}
    +\frac{1}{\mu_i^2}(\sigma\delta_{q+1}^\frac{1}{2}l^{-1})^2\\
    &\lesssim \delta_{q+1}\frac{\mu_{i-1}}{\mu_i}
\end{aligned}
\end{equation}
Besides, since $A\in C^{2,\kappa}(\bar\Omega,\S)$,
\begin{equation*}
    \|A-\tilde A\|_0\lesssim \|A\|_1 l\lesssim l.
\end{equation*}
So we conclude
\begin{equation*}
    \|D_{q+1}\|_0\lesssim l+\delta_{q+1}(\frac{\mu_0}{\mu_1}
    +\frac{\mu_1}{\mu_2}+...+\frac{\mu_{N_*-1}}{\mu_{N_*}}).
\end{equation*}
We now take frequencies $\mu_i$ as
\begin{equation*}
    \mu_i:=\mu_0^{1-\frac{i}{N_*}}\mu_{N_*}^\frac{i}{N_*}
\end{equation*}
and require
\begin{equation}\label{ineq**}
    l\le \frac{\sigma\delta_{q+2}}{C},\quad 
    \frac{\mu_0}{\mu_{N_*}}\le
    (\frac{\sigma\delta_{q+2}}{C\delta_{q+1}})^{N_*}
\end{equation}
for some universal constant $C>1$ large enough. 
In particular \eqref{ineq03} holds.
Now we get
\begin{equation*}
\|D_{q+1}\|_0\lesssim l+\delta_{q+1}(\frac{\mu_0}{\mu_{N_*}})^\frac{1}{N_*}\le{\sigma}\delta_{q+2}
\end{equation*}
which is exactly \eqref{prop1.2}. This concludes the proof of Proposition \ref{prop1}.
\end{proof}

\textbf{Step 4 (Conclusion): $(V_q,W_q)\to(v,w)$.}
Now it is easy to prove Theorem \ref{thm1}. First take $\sigma=\sigma_*/3$ and take $K$ large enough, such that \eqref{K3} \eqref{014} \eqref{K6} \eqref{K2} hold.

Next we determine the parameters $b,c$ depending on $N_*,\alpha$, and $a>1$ depending on $K,\sigma,\epsilon$ and $b,c$. The inequalities we require are \eqref{ineq*} \eqref{ineq**} (recall \eqref{mu0} \eqref{l})
\begin{gather*}
    \delta_1^{-N_*}\le \delta_0^\frac{1}{2}\lambda_0,\\
    \frac{\sigma\delta_{q+1}^\frac{1}{2}}{K\delta_q^\frac{1}{2}\lambda_q}\le\frac{\sigma\delta_{q+2}}{C},\\
    K\frac{\delta_q^\frac{1}{2}\lambda_q}{\delta_{q+1}^\frac{1}{2}\lambda_{q+1}}\le
    (\frac{\sigma\delta_{q+2}}{C\delta_{q+1}})^{N_*}
\end{gather*}
and \eqref{ineq01} \eqref{ineq02} (recall $\sigma=\sigma_*/3$)
\begin{equation}\label{907}
    \delta_1\le{\sigma},\quad
    \delta_{q+2}\le {\sigma}\delta_{q+1}.
\end{equation}
Recall $\delta_q=a^{-b^q},\lambda_q=a^{cb^q}$. By direct computation, the first three are equivalent to
\begin{equation}\label{ineq901}
    c\ge N_* b+\frac{1}{2}+\frac{1}{(b-1)b^q}\log_a(\frac{CK}{\sigma^{N_*}})
\end{equation}
On the other hand, by \eqref{prop1.0} \eqref{prop1.1} and interpolation
\begin{equation}\label{interp}
    \|V_{q+1}-V_q\|_{1+\alpha}\leq \|V_{q+1}-V_q\|_1^{1-\alpha} \|V_{q+1}-V_q\|_2^{\alpha}\le K\delta_{q+1}^\frac{1}{2}\lambda_{q+1}^\alpha.
\end{equation}
So convergence in the $C^{1,\alpha}$ norm is equivalent to
\begin{equation}\label{ineq902}
    -\frac{1}{2}+c\alpha<0.
\end{equation}
As long as
\begin{equation*}
    \alpha<\frac{1}{1+2N_*}=\frac{1}{1+n+n^2},
\end{equation*}
\eqref{907} \eqref{ineq901} and \eqref{ineq902} can be achieved by taking $b>1$ close to $1$, $c>N_*+\frac{1}{2}$ close to $N_*+\frac{1}{2}$ and $a>1$ sufficiently large.
We fix such $b,c$ and below take $a$ even larger.

\eqref{011} \eqref{012} \eqref{014} in Step 2 yield the induction assumptions \eqref{prop1.a1} \eqref{prop1.a2} and \eqref{prop1.a3} of Proposition \ref{prop1} at $q=0$. Moreover, as
\begin{equation*}
    \|(V_0,W_0)\|_1\le \frac{\sqrt{K}}{2},\quad
    \|(V_{q+1}-V_q, W_{q+1}-W_q)\|_1\le K\delta_{q+1}^\frac{1}{2},
\end{equation*}
by taking $a>1$ large enough, we may assume
\begin{equation*}
    \|(V_q,W_q)\|_1\le \sqrt{K},\ \forall\ q\ge 0.
\end{equation*}
Now successive use of Proposition \ref{prop1} produces a series of $(V_q,W_q)$ satisfying \eqref{prop1.0}-\eqref{prop1.2} and hence the interpolation bound \eqref{interp}, so
\begin{align*}
    &(V_q,W_q)\to (v,w)\ \text{in}\ C^{1,\alpha}(\bar\Omega)\times C^{1,\alpha}(\bar\Omega,\R^n),\\
    &D_q\to 0\ \text{in}\ C^0(\bar\Omega,\S).
\end{align*}
Recall
\begin{equation*}
    D_q=A-\frac{1}{2}\nabla V_q\otimes\nabla V_q-\sym\nabla W_q-\delta_{q+1}Id.
\end{equation*}
Taking limit, we get
\begin{equation*}
    A-\frac{1}{2}\nabla v\otimes\nabla v-\sym\nabla w=0.
\end{equation*}
Moreover by \eqref{1002} \eqref{013} and \eqref{prop1.0},
\begin{align*}
    \|v-v^b\|_0
    &\le \|\bar v_N-v^b\|_0+\|V_0-\bar v_N\|_0+\sum_{q=0}^{+\infty}\|V_{q+1}-V_q\|_0\\
    &\le \frac{\epsilon}{3}+\sigma\delta_1+\sum_{q=0}^{+\infty}K\delta_{q+1}^\frac{1}{2}\\
    &\le \epsilon
\end{align*}
and similarly $\|w-w^b\|_0\le\epsilon$, as long as $a$ is large enough. So \eqref{thm1.1} and \eqref{thm1.2} are justified. This concludes the proof of Theorem \ref{thm1}.

\section{Cut-off technique.}\label{sec2}
In this section we take into consideration the boundary value $v=g$. As in \cite{cao23dirichlet}, we adapt the arguments in the last section by further introducing a cut-off procedure. More precisely, we will cut-off at the level set of a function $\psi$, which represents the size of $D^b$, and in each stage $(V_q,W_q)\to(V_{q+1},W_{q+1})$, we only change the data in the set
\begin{equation*}
    \{\psi>\delta_{q+2}\}\Subset\Omega.
\end{equation*}

Let's now explain the details. First we take $v^b\in C^{2,\kappa}(\bar\Omega)$ to solve
\begin{equation}\label{gggg}
    \Delta v^b = 0\ \text{in}\ \Omega,\quad
    v^b = g\ \text{on}\ \partial\Omega.
\end{equation}
With this initial data $v^b$, we will show that 
\begin{thm}\label{mainthm3}
    Under the conditions of Theorem \ref{mainthm2} or \ref{mainthm}, for any $\epsilon>0$, there exists a very weak solution $v\in C^{1,\alpha}(\bar\Omega)$ to \eqref{eq1} such that
\begin{equation*}
    \|v-v^b\|_0\le \epsilon,\quad v=v^b=g\ \text{ on }\ \partial\Omega.
\end{equation*}
\end{thm}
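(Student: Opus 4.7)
The plan is to adapt the inductive scheme of Section \ref{sec1} by introducing, at every stage, a smooth cut-off that localizes the perturbation to a strictly interior subdomain, so that $v=v^b=g$ on $\partial\Omega$ is preserved throughout the iteration. First I would fix $v^b$ as the harmonic extension \eqref{gggg} of $g$ and $w^b\equiv 0$. The matrix field $A$ must now degenerate on $\partial\Omega$ (rather than being $(u+\tau)\mathrm{Id}$ with a large global constant): for Theorem \ref{mainthm} (where $f>0$) take $A=u\,\mathrm{Id}$ with $-(2n-2)\Delta u=f$ in $\Omega$ and $u=0$ on $\partial\Omega$, so that $u>0$ in $\Omega$ by the maximum principle; for Theorem \ref{mainthm2} choose $g$ (for instance $g\equiv 0$ or a sufficiently small boundary datum) so that the harmonic extension $v^b$ has small enough gradient to keep $D^b:=A-\tfrac12\nabla v^b\otimes\nabla v^b$ positive in $\Omega$ and vanishing on $\partial\Omega$.

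Next, define the scalar profile $\psi(x)$ measuring the size of $D^b$, for instance $\psi(x):=\lambda_{\min}(D^b(x))$ (smoothed if necessary), so that $\psi>0$ in $\Omega$ and $\psi=0$ on $\partial\Omega$. Pick smooth cut-offs $\chi_q\in C^\infty_c(\Omega)$ with $\chi_q\equiv 1$ on $\{\psi\ge 2\delta_{q+2}\}$, $\mathrm{supp}(\chi_q)\Subset\{\psi>\delta_{q+2}\}$ and the standard derivative bounds $\|\nabla^k\chi_q\|_0\lesssim \delta_{q+2}^{-k}$. In the one-stage induction in Proposition \ref{prop1}, multiply every corrugation increment $\tfrac{1}{\mu_i}\Gamma_{1,2}(a_i,\mu_i x\cdot\xi_i)$ by $\chi_q$. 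Since all perturbations then vanish on a collar neighborhood of $\partial\Omega$, we have $(V_q,W_q)\equiv(v^b,0)$ near $\partial\Omega$ for all $q$, and the $C^{1,\alpha}$ limit automatically satisfies $v=v^b=g$ on $\partial\Omega$.

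The work of re-proving Proposition \ref{prop1} lies in controlling the new commutator-type contributions to $D_{q+1}$ that arise when $\nabla\chi_q$ hits the high-frequency corrugation. Using \eqref{bounds} together with $\|\nabla\chi_q\|_0\lesssim\delta_{q+2}^{-1}$, these extra terms scale like $\delta_{q+1}^{1/2}\mu_i^{-1}\delta_{q+2}^{-1}$; they fit into the $\sigma\delta_{q+2}$ budget provided the frequencies $\mu_i$ (and ultimately $\lambda_{q+1}$) are taken large enough, which amounts to strengthening \eqref{ineq**} by an extra factor of $\delta_{q+2}^{-1}$ and is accommodated by a slightly larger $c$. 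On the untouched collar $\{\psi\le\delta_{q+2}\}$ we do not reduce the deficit further, but there $D_q\approx D^b-\delta_{q+1}\mathrm{Id}$ has size $O(\psi+\delta_{q+1})=O(\delta_{q+1})$, still compatible with the induction hypothesis \eqref{prop1.a3}. Steps 1, 2 and 4 of Section \ref{sec1} go through with only cosmetic modifications, and the same telescoping interpolation as in \eqref{interp} gives convergence $(V_q,W_q)\to(v,w)$ in $C^{1,\alpha}(\bar\Omega)$ together with the $C^0$ estimate $\|v-v^b\|_0\le\epsilon$.

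The main obstacle is the interplay between the cut-off gradient $\|\nabla\chi_q\|_0\sim \delta_{q+2}^{-1}=a^{b^{q+2}}$ and the corrugation frequency $\lambda_{q+1}=a^{cb^{q+1}}$: the cut-off derivative grows at a rate $b^{q+2}=b\cdot b^{q+1}$ which must remain dominated by $cb^{q+1}$, i.e.\ $c\ge b$. Since we only need $b$ slightly larger than $1$ and $c$ slightly larger than $N_*+\tfrac12$, this is comfortably satisfied and the threshold exponent $\alpha<\tfrac{1}{1+n+n^2}$ is not degraded. The other subtle point is ensuring enough regularity of $\psi$ to take smooth super-level sets; this follows once we observe that $D^b\in C^{1,\kappa}(\bar\Omega)$ and $\psi$ can be replaced, if needed, by a smoothing of $\lambda_{\min}(D^b)$.
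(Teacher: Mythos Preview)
Your outline has the right spirit but misses two ideas that the paper's proof relies on, and without them the induction does not close.

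\textbf{The deficit on the untouched collar.} You keep the deficit $D_q=A-\tfrac12\nabla V_q\otimes\nabla V_q-\sym\nabla W_q-\delta_{q+1}Id$ from Section~\ref{sec1}. On the collar $\{\psi\le\delta_{q+2}\}$ where $(V_q,W_q)=(v^b,0)$, this gives $D_q=D^b-\delta_{q+1}Id$, which (even if $D^b=\psi Id$) has size $|\psi-\delta_{q+1}|\sim\delta_{q+1}$, \emph{not} $\sigma\delta_{q+1}$. So \eqref{prop1.a3} fails already at the constant level, and at the next stage you would need $\|D_{q+1}\|_0\le\sigma\delta_{q+2}$ while on the new collar $D_{q+1}=D^b-\delta_{q+2}Id$ again has size $\sim\delta_{q+2}$. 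The paper fixes this by replacing $\delta_{q+1}Id$ with $\psi_{q+1}Id$, where $\psi_{q+1}$ is a smooth truncation of $\psi$ equal to $\delta_{q+1}$ on $\Omega_{q+1}$ and to $\psi$ outside $\tilde\Omega_{q+1}$; then on the collar $D_q=\psi Id-\psi Id=0$ exactly, and the induction hypothesis becomes $(V_q,W_q,D_q)=(v^b,0,0)$ there. This redefinition, not a larger frequency, is what makes the scheme close near the boundary.

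\textbf{The choice of $A$ for arbitrary $g$.} For Theorem~\ref{mainthm} you take $A=u\,Id$ with $u=0$ on $\partial\Omega$, giving $D^b=u\,Id-\tfrac12\nabla v^b\otimes\nabla v^b$. On $\partial\Omega$ this equals $-\tfrac12\nabla v^b\otimes\nabla v^b$, which is nonzero and negative semidefinite whenever $\nabla v^b\neq0$; in particular $\psi=\lambda_{\min}(D^b)<0$ near $\partial\Omega$ and your level sets $\{\psi>\delta_{q+2}\}$ fail to exhaust $\Omega$. The paper instead sets $A=\psi\,Id+\tfrac12\nabla v^b\otimes\nabla v^b$ with $-(2n-2)\Delta\psi=f-\sigma_2(\nabla^2 v^b)$ and $\psi|_{\partial\Omega}=0$; since $v^b$ is harmonic, Newton's inequality gives $\sigma_2(\nabla^2 v^b)\le0$, so $f-\sigma_2(\nabla^2 v^b)>0$ and the strong maximum principle yields $\psi>0$ in $\Omega$. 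This makes $D^b=\psi\,Id$ an exact nonnegative scalar multiple of $Id$ vanishing precisely on $\partial\Omega$, which is what the truncation $\psi_{q+1}$ and the cut-off at level sets of $\psi$ require. A secondary point: the paper builds the cut-off into the amplitudes $a_i$ (so $a_i=0$ outside $\tilde\Omega_{q+2}$) rather than multiplying the corrugations $\Gamma_{1,2}$ externally by $\chi_q$; this keeps the error structure identical to Section~\ref{sec1} and avoids the extra $\nabla\chi_q$ commutators you describe.
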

This clearly yields Theorem \ref{mainthm2} and \ref{mainthm}. The proof is in the same line with Section \ref{sec1} and essentially follows \cite{cao23dirichlet}. Remark \ref{rem995} explains in more detail the construction of $\tilde D$, corresponding to the quantity $H_{q+1}$ in \cite[page 19]{cao23dirichlet}.

\textbf{Part one: solve \eqref{2}.} 
Assuming that first $f\in C^{0,\kappa}(\bar\Omega)$ and $f>0$ aligns with the assumption in Theorem \ref{mainthm}. Given $g\in C^{2,\kappa}(\partial\Omega)$, we let $v^b$ solves \eqref{gggg} and take $\psi$ such that
\begin{align*}
    -(2n-2)\Delta \psi &= f-\sigma_2(\nabla^2 v^b) \quad \text{in}\ \Omega,\\
    \psi &= 0 \quad\text{on}\ \partial\Omega.
\end{align*}
Then by Newton inequality  $\sigma_2(\nabla^2 v^b)\leq \frac{n-1}{2n} (\Delta v^b)^2  = 0$, so
\begin{equation*}
    f-\sigma_2(\nabla^2 v^b)>0.
\end{equation*}
For general $f\in C^{0,\kappa}(\bar\Omega)$ as in Theorem \ref{mainthm2}, we pick $v^b\in C^{3}(\bar\Omega)$ such that 
\begin{equation*}
f-\sigma_2(\nabla^2 v^b)>0,
\end{equation*}
e.g. $v^b(x)=C(x_1^2-x_2^2)$ for $C>1$ sufficiently large. Then we take $g=v^b\mid_{\partial\Omega}$ and $\psi$ as above. 
In both cases, $\psi\in C^{2,\kappa}(\bar\Omega)$ and by strong maximum principle
\begin{equation*}
    \psi>0\ \text{in}\ \Omega,\quad \psi=0\ \text{on}\ \partial\Omega.
\end{equation*}
Let
\begin{equation*}
A=\psi Id+\frac{1}{2}\nabla v^b\otimes\nabla v^b, 
\end{equation*}
Then
\begin{equation*}
    -L(A)=-(2n-2)\Delta \psi+\sigma_2(\nabla^2 v^b)=f.
\end{equation*}

\textbf{Part two: solve \eqref{4}.} Again, we will construct approximate solutions $(V_q,W_q)$ inductively. 

\textbf{Step 1 (Preparation).}
Set $w^b=0$. Define the initial deficit matrix by 
\begin{equation}\label{z0001}
    D^b:=A-\frac{1}{2}\nabla v^b\otimes\nabla v^b-\sym\nabla w^b=\psi Id.
\end{equation}
Unlike before, here we do not need to construct $(\bar v_N,\bar w_N)$ to reduce $D^b$, since $D^b$ is merely a multiple of $Id$.  In particular, Lemma \ref{lem1} is never used. The fact that $D^b$ is merely a (non-negative) multiple of $Id$ is crucial to the cut-off technique: we will cut-off at the level set of $\psi$.

Let's introduce some notations. We decompose
\begin{equation*}
    Id=\sum_{i=1}^{N_*}d_i^{*2}\xi_i\otimes\xi_i.
\end{equation*}
for some constants $d_i^*\in[c_*,C_*]$, by applying Lemma \ref{lem2} to $Id$. 

Let
\begin{align*}
    &\Omega_q:=\{x\in\Omega:\psi(x)>2\delta_{q}\},\\
    &\tilde\Omega_q:=\{x\in\Omega:\psi(x)>\frac{3}{2}\delta_{q}\}.
\end{align*}
Clearly $\Omega_q\Subset\tilde\Omega_q\Subset\Omega_{q+1}\Subset\Omega$ and
\begin{equation*}
    dist(\tilde\Omega_q,\partial\Omega)\ge dist(\Omega_q,\partial\tilde\Omega_q)\gtrsim \delta_{q}.
\end{equation*}
Here and below the implicit constants may depend on $\|\psi\|_1$.

Let $\eta_q$ be a smooth cut-off function subordinate to $\Omega_q\subset\tilde\Omega_q$, i.e.
\begin{align*}
    &\eta_q=1\quad\text{in}\ \Omega_q,\\
    &\eta_q=0\quad\text{in}\ \Omega-\tilde\Omega_q,\\
    &\|\nabla^k \eta_q\|_0\lesssim \delta_{q}^{-k},\ \forall 0\le k\le 3.
\end{align*}
Let $\psi_q$ be a smooth truncation of $\psi$ at $\delta_q$, defined by
\begin{equation}
\psi_{q}:=\eta_q^2\delta_{q}+(1-\eta_q^2)\psi.
\end{equation}
Note that
\begin{equation*}
    \|\nabla\psi_q\|_0\lesssim \delta_q \|\nabla\eta_q\|_0 + \|\nabla\eta_q\|_0\|\psi\|_{C^0(\Omega-\Omega_q)} +\|\nabla\psi\|_0\lesssim 1.
\end{equation*}
\textbf{Step 2 ($0$th approximate): $(v^b,w^b)\to(V_0,W_0)$.}
By mollification in $\tilde\Omega_0$, we may assume $v^b\in C^\infty(\tilde\Omega_0)$. Let
\begin{equation*}
    \hat a_i:=\eta_1(\psi-\delta_1)^\frac{1}{2}d_i^*,
\end{equation*}
in other words (recall $D^b=\psi Id$)
\begin{equation}\label{z0002}
    \eta_1^2(D^b-\delta_1 Id)=\sum_{i=1}^{N_*}\hat a_i^2\xi_i\otimes\xi_i.
\end{equation}
Then
\begin{gather}
    \|\nabla^k\hat a_i\|_0\lesssim \delta_1^{-k},\ \forall0\le k\le 2, \label{996}\\
    \hat{a}_i = 0 \quad\text{in }\Omega-\tilde\Omega_1. \label{2020}
\end{gather}

As in Step 2 in Section \ref{sec1}, we define
\begin{align}
    &( \hat v_0, \hat w_0)=(v^b, w^b),\\
    & \hat{v}_i= \hat{v}_{i-1}+\frac{1}{\hat{\mu}_i}\Gamma_1(\hat{a}_i, \hat{\mu}_i x\cdot \xi_i),\\
    & \hat{w}_i= \hat{w}_{i-1}-\frac{1}{\hat{\mu}_i}\Gamma_1(\hat{a}_i,\hat{\mu}_i x\cdot \xi_i)\nabla \hat{v}_{i-1}+\frac{1}{\hat{\mu}_i}\Gamma_2(\hat{a}_i,\hat{\mu}_i x\cdot \xi_i) \xi_i,\\
    &(V_0,W_0)=(\hat v_{N_*},\hat w_{N_*}),
\end{align}
with constants $1\ll\hat{\mu}_1<...<\hat{\mu}_{N_*}$ determined later. Denote the $0$th stage deficit matrix by
\begin{equation}\label{2028}
     D_0:=A-\frac{1}{2}\nabla  V_0\otimes\nabla V_0-\sym\nabla W_0-\psi_1 Id.
\end{equation}
Note that by \eqref{2020}
\begin{equation}\label{hhhh}
(V_0,W_0,D_0)=(v^b,w^b,0)\quad\text{in}\ \Omega-\tilde\Omega_1.
\end{equation}
The $i$-th step error is
\begin{equation*}
\begin{aligned}
    \hat{\E}_i
    &=\hat{a}_i^2 \xi_i\otimes \xi_i
    +(\frac{1}{2}\nabla  \hat{v}_{i-1}\otimes\nabla  \hat{v}_{i-1}+\sym\nabla   \hat{w}_{i-1})
    -(\frac{1}{2}\nabla  \hat{v}_i\otimes\nabla  \hat{v}_i+\sym\nabla   \hat{w}_i)\\
    & =\frac{1}{ \hat{\mu}_i}\Gamma_1\nabla^2  \hat{v}_{i-1}
    -\frac{1}{ \hat{\mu}_i}[\partial_{s}\Gamma_2+(\partial_{s}\Gamma_1)(\partial_{t}\Gamma_1)]\sym(\nabla \hat{a}_i\otimes \xi_i)\\
    & -\frac{1}{2\hat{\mu}_i^2}|\partial_{s}\Gamma_1|^2\nabla \hat{a}_i\otimes\nabla \hat{a}_i
\end{aligned}
\end{equation*}
Injecting \eqref{z0001} and \eqref{z0002} into \eqref{2028}
\begin{align*}
     D_0&=A-\frac{1}{2}\nabla  V_0\otimes\nabla V_0-\sym\nabla W_0-\eta_1^2\delta_1 Id-(1-\eta_1^2)\psi Id\\
     &=A-D^b-\frac{1}{2}\nabla  V_0\otimes\nabla V_0-\sym\nabla W_0+\eta_1^2(D^b-\delta_1 Id)\\
     &=\frac{1}{2}\nabla  v^b\otimes\nabla v^b+\sym\nabla w^b
     -\frac{1}{2}\nabla V_0\otimes\nabla V_0-\sym\nabla W_0
     +\sum_{i=1}^{N_*}\hat a_i^2\xi_i\otimes\xi_i\\
     &=\sum_{i=1}^{N_*}\hat\E_i
\end{align*}
We turn to required estimates. Using \eqref{bounds} \eqref{996}
\begin{align*}
    \|\nabla^2\hat v_i-\nabla^2\hat v_{i-1}\|_0
    &\lesssim\frac{1}{\hat\mu_i}(\|\nabla^2\hat a_i\|_0+\hat\mu_i\|\nabla\hat a_i\|_0+\hat\mu_i^2\|\hat a_i\|_0)\\
    &\lesssim\frac{1}{\hat \mu_i}(\delta_1^{-2}+\hat\mu_i\delta_1^{-1}+\hat\mu_i^2)\\
    &\lesssim \hat\mu_i
\end{align*}
where we assumed
\begin{equation}\label{a1}
    \hat\mu_i\ge \delta_1^{-1}.
\end{equation}
Using this and \eqref{bounds} \eqref{996}, we get
\begin{align*}\label{bd0002}
    \|\hat\E_i\|_0
    \lesssim \frac{1}{\hat\mu_i}\|\hat a_i\|_0\|\nabla^2\hat v_{i-1}\|_0
    +\frac{1}{\hat\mu_i}\|\hat a_i\|_0\|\nabla\hat a_i\|_0
    +\frac{1}{\mu_i^2}\|\nabla\hat a_i\|_0^2
    \lesssim \frac{\hat\mu_{i-1}}{\hat\mu_i}
\end{align*}
where we write $\hat\mu_0=1$. This is the same as in Step 2 of Section \ref{sec1}. Similarly we deduce
\begin{gather*}
    \| D_0\|_0\le\sum_{i=1}^{N_*}\|\hat\E_i\|_0\lesssim \frac{1}{\hat{\mu}_1}+\frac{\hat{\mu}_1}{\hat{\mu}_2}+...+\frac{\hat{\mu}_{N_*-1}}{\hat{\mu}_{N_*}},\\
    \|(V_0,W_0)\|_2\lesssim \hat{\mu}_{N_*}.
\end{gather*}
Hence we take
\begin{equation*}
    \hat{\mu}_i:=(\frac{C}{\sigma\delta_1})^i
\end{equation*}
in particular \eqref{a1} holds. Then, assuming
\begin{equation*}
    \delta_1^{-N_*}\le \delta_0^\frac{1}{2}\lambda_0
\end{equation*}
and $K>1$ large enough, we conclude
\begin{gather*}
    \| D_0\|_0\le {\sigma}\delta_1,\quad
    \|(V_0,W_0)\|_2\le K\delta_0^\frac{1}{2}\lambda_0,\\
    \|(V_0-v^b,W_0-w^b)\|_0\le\sigma\delta_1,\quad
    \|(V_0,W_0)\|_1\le \frac{\sqrt{K}}{2}.
\end{gather*}
Together with \eqref{hhhh}, these ensure the assumptions \eqref{prop2.a1}-\eqref{prop2.a4} below at $q=0$.

\textbf{Step 3 (Induction): $(V_q,W_q)\to(V_{q+1},W_{q+1})$.} 
Denote the $q$-th stage deficit matrix by
\begin{equation}\label{ccc}
   D_q:= A-\frac{1}{2}\nabla V_q\otimes\nabla V_q-\sym\nabla W_q-\psi_{q+1}Id.
\end{equation}
Similar to Proposition \ref{prop1}, we will prove
\begin{prop}\label{prop2}
    There exist some positive universal constants $\sigma\in(0,\sigma_*/3)$ and $K>1$, such that: if $(V_q,W_q)\in C^2(\bar\Omega)\times C^2(\bar\Omega,\R^n)$ satisfy
    \begin{gather}
         \|(V_q,W_q)\|_1\le \sqrt K, \label{prop2.a1}\\
         \|(V_q,W_q)\|_2\le K\delta_q^\frac{1}{2}\lambda_q, \label{prop2.a2}\\
         \|D_q\|_0\le{\sigma}\delta_{q+1}.\label{prop2.a3}
    \end{gather}
    and
    \begin{equation}\label{prop2.a4}
        (V_q,W_q,D_q)=(v^b,w^b,0)\quad\text{in}\ \Omega-\tilde\Omega_{q+1},
    \end{equation}
    we can construct $(V_{q+1},W_{q+1})\in C^2(\bar\Omega)\times C^2(\bar\Omega,\R^n)$ with
    \begin{gather}
        \|(V_{q+1}-V_q, W_{q+1}-W_q)\|_1\le K\delta_{q+1}^\frac{1}{2}\label{prop2.0}\\
        \|(V_{q+1},W_{q+1})\|_2\le K\delta_{q+1}^\frac{1}{2}\lambda_{q+1},\label{prop2.1}\\
         \|D_{q+1}\|_0\le{\sigma}\delta_{q+2}.\label{prop2.2}
    \end{gather}
    and
    \begin{equation}\label{prop2.4}
        (V_{q+1},W_{q+1},D_{q+1})=(v^b,w^b,0)\quad\text{in}\ \Omega-\tilde\Omega_{q+2},\\
    \end{equation}
\end{prop}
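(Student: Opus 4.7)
The plan is to mimic the proof of Proposition \ref{prop1}, inserting the cut-off $\eta_{q+2}$ into the corrugation amplitudes so that the perturbations added at this stage are supported in $\tilde\Omega_{q+2}$ and $(V_{q+1},W_{q+1})=(V_q,W_q)$ in $\Omega-\tilde\Omega_{q+2}$ automatically. Combined with the inductive hypothesis \eqref{prop2.a4}, this yields \eqref{prop2.4}.

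First I mollify $(A,V_q,W_q)$ at scale $l$ to obtain $(\tilde A,\tilde v,\tilde w)$, with $l$ chosen exactly as in \eqref{l}; to keep mollification from crossing the cut-off strip I additionally impose $l\ll\delta_{q+2}$. Taking logs and using $\delta_q=a^{-b^q},\lambda_q=a^{cb^q}$, this amounts to $c > b^2-b/2+1/2$, which is already implied by the parameter choice $c>N_*+\tfrac12$ from Proposition \ref{prop1} once $b$ is close to $1$. Because $\mathrm{dist}(\tilde\Omega_{q+2},\Omega-\Omega_{q+2})\gtrsim\delta_{q+2}$ and $(V_q,W_q)=(v^b,w^b)$ on $\Omega-\tilde\Omega_{q+2}$ by \eqref{prop2.a4}, the mollification coincides with $(v^b,w^b)*\phi_l$ on a neighborhood of $\Omega-\Omega_{q+2}$, and $D_q*\phi_l=0$ there. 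Following \cite{cao23dirichlet}, define
\[
\tilde D := \tilde A - \tfrac{1}{2}\nabla\tilde v\otimes\nabla\tilde v - \sym\nabla\tilde w - \psi_{q+2} Id,
\]
and verify $\|\tilde D/\delta_{q+1}-Id\|_0\le 3\sigma\le\sigma_*$ on $\mathrm{supp}(\eta_{q+2})$: on $\Omega_{q+2}$ (where $\eta_{q+2}=1$, $\psi_{q+2}=\delta_{q+2}$) this is the direct counterpart of the bound in Proposition \ref{prop1} applied to $D_q+(\psi_{q+1}-\delta_{q+2})Id$, exploiting $\|D_q\|_0\le\sigma\delta_{q+1}$, $\psi_{q+1}=\delta_{q+1}$ on $\Omega_{q+1}\supset\tilde\Omega_{q+2}$, and $\delta_{q+2}=o(\delta_{q+1})$; in the transition strip $\tilde\Omega_{q+2}\setminus\Omega_{q+2}$ one has $\tilde D\sim\eta_{q+2}^2\delta_{q+1}Id$, which after rescaling by the appropriate scalar still lies in the domain of Lemma \ref{lem2}.

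Having obtained $\tilde D=\sum_{i=1}^{N_*}a_i^2\xi_i\otimes\xi_i$ with $a_i$ supported in $\tilde\Omega_{q+2}$ and
\[
\|a_i\|_0\lesssim\delta_{q+1}^{1/2},\qquad \|\nabla^k a_i\|_0\lesssim\sigma\delta_{q+1}^{1/2}l^{-k},\ k=1,2,3,
\]
I carry out the $N_*$-step corrugation exactly as in Proposition \ref{prop1}, with the same choice of frequencies $\mu_i=\mu_0^{1-i/N_*}\mu_{N_*}^{i/N_*}$, $\mu_0=K\delta_{q+1}^{-1/2}\delta_q^{1/2}\lambda_q$, $\mu_{N_*}=\lambda_{q+1}$. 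Since each $a_i$ vanishes outside $\tilde\Omega_{q+2}$, so do the perturbations, which proves the first two parts of \eqref{prop2.4}; the third part, $D_{q+1}=0$ there, then follows from the outside value $D_q=0$ in \eqref{prop2.a4}. The estimates \eqref{prop2.0}-\eqref{prop2.2} follow line-by-line from the proof of Proposition \ref{prop1}; the only new ingredient is derivatives of $\eta_{q+2}$, which contribute at most $\delta_{q+2}^{-k}$ and are absorbed into the $l^{-k}$ terms thanks to $l\ll\delta_{q+2}$.

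The main obstacle will be ensuring the decomposition makes sense uniformly in the transition strip $\tilde\Omega_{q+2}\setminus\Omega_{q+2}$, where $\psi_{q+1}-\psi_{q+2}$ is not cleanly controlled by $\delta_{q+1}$ and $\eta_{q+2}$ is genuinely partial. This requires the careful definition of $\tilde D$ alluded to in Remark \ref{rem995} (the analog of $H_{q+1}$ in \cite{cao23dirichlet}), which must match $\tilde D$ smoothly to $0$ across $\partial\tilde\Omega_{q+2}$ while keeping its normalization inside the cone of validity of Lemma \ref{lem2}. Once this construction is in place, the remaining work is bookkeeping strictly parallel to Proposition \ref{prop1}.
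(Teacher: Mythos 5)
Your overall strategy is the right one and matches the paper's (cut off near $\partial\Omega$ at the level sets of $\psi$, glue the deficit, corrugate only inside $\tilde\Omega_{q+2}$), but the two places where you defer to "the careful definition of $\tilde D$" are exactly where the proof lives, and the concrete choices you do write down would fail there. First, your $\tilde D := \tilde A-\tfrac12\nabla\tilde v\otimes\nabla\tilde v-\sym\nabla\tilde w-\psi_{q+2}Id$ cannot be normalized by $\delta_{q+1}$ on $\mathrm{supp}\,\eta_{q+2}$: in the transition strip $\tilde\Omega_{q+2}\setminus\Omega_{q+2}$ one has $\psi\sim\delta_{q+2}$, hence (using \eqref{prop2.a4}) $\tilde D\approx\eta_{q+2}^2(\psi-\delta_{q+2})Id=O(\delta_{q+2})Id$, which after division by $\delta_{q+1}$ is near $0$, not near $Id$, so Lemma \ref{lem2} does not apply; and normalizing instead by the degenerating scalar $\eta_{q+2}^2(\tilde\psi_{q+1}-\delta_{q+2})$ is only legitimate because there the matrix is \emph{exactly} a multiple of $Id$ and one can take $a_i=\eta_{q+2}(\tilde\psi_{q+1}-\delta_{q+2})^{1/2}d_i^*$ with the constant decomposition $Id=\sum d_i^{*2}\xi_i\otimes\xi_i$ — not Lemma \ref{lem2}. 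The paper's actual $\tilde D$ in \eqref{tildeD} is an $\eta^2$-interpolation (with a \emph{new} cut-off $\eta$ at the level $\psi\sim\delta_{q+1}$, not $\eta_{q+2}$) between the mollified deficit and the pure multiple $\eta_{q+2}^2(\tilde\psi_{q+1}-\delta_{q+2})Id$; the point of Remark \ref{rem995} is that one glues the \emph{matrix} first so that the two families $a_{i,1},a_{i,2}$ automatically patch, rather than gluing the amplitudes (which would create an uncancelled quadratic error). Your proposal asserts this gluing can be done but does not supply it.

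Second, your claim that $(V_{q+1},W_{q+1})=(v^b,w^b)$ outside $\tilde\Omega_{q+2}$ "automatically" does not hold if the corrugation is based on the mollification $(\tilde v,\tilde w)$: outside the support of the $a_i$ you would be left with $(v^b*\phi_l,0)\neq(v^b,0)$, breaking \eqref{prop2.4} and hence the boundary condition. The paper repairs this by corrugating on top of the blend $(v_0,w_0)=\eta^2(\tilde v,\tilde w)+(1-\eta^2)(V_q,W_q)$, and this blending injects new terms into $D_{q+1}$ of the schematic form $\sym[(\nabla\eta^2)(\tilde v-V_q)]\otimes\nabla V_q$ and $(\nabla\eta^2)\otimes(\tilde w-W_q)$, of size $\sim\delta_{q+1}^{-1/2}l$ (since $\|\nabla\eta^2\|_0\lesssim\delta_{q+1}^{-1}$ and $\|\tilde v-V_q\|_0\lesssim\|V_q\|_2l^2$). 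Absorbing them into $\sigma\delta_{q+2}$ requires the strictly stronger constraint $\sqrt K\,\delta_{q+1}^{-1/2}l\le\sigma\delta_{q+2}/C$ of \eqref{ww1}, not merely $l\ll\delta_{q+2}$ as you impose; one must check (as the paper does in its Step 4) that this is still compatible with the parameter choices $b\to1^+$, $c>N_*+\tfrac12$. Until the glued $\tilde D$, the blended $(v_0,w_0)$, and these extra error terms are written out and estimated, the proof is not complete.
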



\begin{proof}
Let $\tilde A,\tilde v,\tilde w,\tilde\psi_{q+1}$ be the mollification of $A,V_q,W_q,\psi_{q+1}$, respectively, at length $l$ in $\tilde\Omega_{q+2}$, where as in \eqref{mu0} \eqref{l}
\begin{equation}\label{ll}
     \mu_0 := K\delta_{q+1}^{-\frac{1}{2}}\delta_{q}^\frac{1}{2}\lambda_q,\quad l:=\frac{\sigma}{C\mu_0}.
\end{equation}
In particular \eqref{prop2.a2} is
\begin{equation}\label{prop2.a2'}
    \|(V_q,W_q)\|_2\le \delta_{q+1}^\frac{1}{2}\mu_0.
\end{equation}
Let $\eta$ be a smooth cut-off function such that
\begin{gather*}
    \eta=1\quad\text{if } \psi\ge\frac{5}{4}\delta_{q+1},\\
    \eta=0\quad\text{if } \psi\le\delta_{q+1},
\end{gather*}
and
\begin{equation}
    \|\nabla^k\eta\|_0\lesssim \delta_{q+1}^{-k},\ \forall 0\le k\le 3.
\end{equation}
We assume
\begin{equation}\label{ineq999}
    l\le \frac{\delta_{q+2}}{4\|\psi\|_1+1}.
\end{equation}
Then
\begin{enumerate}
    \item[($l_1$)] $l\le dist(\tilde\Omega_{q+2},\partial\Omega)$ and the mollification is well-defined;
    \item[($l_2$)] $\eta=1$ in the $l$-neighborhood of $\tilde\Omega_{q+1}$.
\end{enumerate} 
Define
\begin{equation}\label{tildeD}
\begin{aligned}
    \tilde D &:= \eta^2(\tilde A-\frac{1}{2} \nabla\tilde v\otimes\nabla\tilde v-\sym\nabla\tilde w-\delta_{q+2}Id)\\
    &\quad+(1-\eta^2)\eta_{q+2}^2(\tilde\psi_{q+1}-\delta_{q+2})Id. 
\end{aligned}
\end{equation}
Compared with \eqref{ccc}, the first line is
\begin{equation*}
\begin{aligned}
    \eta^2(\tilde A-\frac{1}{2} \nabla\tilde v\otimes\nabla\tilde v-\sym\nabla\tilde w-\delta_{q+2}Id) &=\eta^2 D_q*\phi_l+\eta^2(\tilde\psi_{q+1}-\delta_{q+2})Id.
\end{aligned}
\end{equation*}
Since $\eta^2\eta_{q+2}^2=\eta^2$, the second line is
\begin{equation*}
    (1-\eta^2)\eta_{q+2}^2(\tilde\psi_{q+1}-\delta_{q+2})Id = \eta_{q+2}^2(\tilde\psi_{q+1}-\delta_{q+2})Id - \eta^2(\tilde\psi_{q+1}-\delta_{q+2})Id.
\end{equation*}
Therefore
\begin{equation}
\begin{aligned}
    \tilde D &= \eta_{q+2}^2(\tilde\psi_{q+1} -\delta_{q+2})Id\\ &\quad +\eta^2 D_q*\phi_l +\frac{1}{2}\eta^2[(\nabla V_q\otimes\nabla V_q)*\phi_l-\nabla\tilde v\otimes\nabla\tilde v].
\end{aligned}
\end{equation}
Now we construct $a_i$ in two cases separately.

Case 1: $\psi\ge \delta_{q+1}$. Then by \eqref{phil3} \eqref{prop2.a3}
\begin{equation*}
    \|\eta^2 D_q*\phi_l\|_0 \le \|D_q\|_0\le \sigma\delta_{q+1},
\end{equation*}
and by \eqref{ll} \eqref{prop2.a2'} and \eqref{phil2}
\begin{equation*}
    \|\frac{1}{2}\eta^2[(\nabla V_q\otimes\nabla V_q)*\phi_l-\nabla\tilde v\otimes\nabla\tilde v]\|_0 \lesssim \|V_q\|_2^2 l^2 \le \sigma\delta_{q+1},
\end{equation*}
so
\begin{equation*}
   \| \tilde D -\eta_{q+2}^2(\tilde\psi_{q+1} -\delta_{q+2})Id\|_0 \le 2\sigma\delta_{q+1}.
\end{equation*}
Let us assume $\sigma\le \frac{\sigma_*}{3}\le \frac{1}{6}$, such that
\begin{equation} \label{deltasigma}
    \delta_{q+2}\le \sigma\delta_{q+1}\le \frac{1}{6}\delta_{q+1}.
\end{equation}
So from $\psi\ge \delta_{q+1}$, we have $\eta_{q+2}=1$. And by \eqref{ineq999} and the definition of 
\begin{equation*}
 \psi_{q+1}=\eta^2_{q+1} \delta_{q+1} +(1-\eta^2_{q+1}) \psi,   
\end{equation*}
we obtain
\begin{equation*}
    \eta_{q+2}^2(\tilde\psi_{q+1}-\delta_{q+2}) \ge \delta_{q+1}-\frac{5}{4}\delta_{q+2}\ge \frac{2}{3}\delta_{q+1}.
\end{equation*}
Therefore
\begin{equation*}
    \|\frac{\tilde D}{\eta_{q+2}^2(\tilde\psi_{q+1}-\delta_{q+2})}-Id\|_0\le 3\sigma\le\sigma_*.
\end{equation*}
Note that since
\begin{equation*}
    \|\frac{\tilde D}{\eta_{q+2}^2(\tilde\psi_{q+1}-\delta_{q+2})}-Id\|_k\lesssim l^{-k},\forall 0\le k\le 3
\end{equation*}
where we omit the universal constant $\sigma_\ast$.
Applying Lemma \ref{lem2}, we get
\begin{equation}
    \tilde D=\sum_{i=1}^{N_*} a_{i,1}^2\xi_i\otimes\xi_i,
\end{equation}
and
\begin{equation*}
    \|\nabla^k a_{i,1}\|_0
    \lesssim \delta_{q+1}^\frac{1}{2}l^{-k},\ \forall 0\le k\le 3.
\end{equation*}

Case 2: $\psi\le\delta_{q+1}$. Then $\eta=0$ and
\begin{equation*}
    \tilde D = \eta_{q+2}^2(\tilde\psi_{q+1} -\delta_{q+2})Id.
\end{equation*}
Recall $Id = \sum_{i=1}^{N_*}d_i^{*2}\xi_i\otimes\xi_i$. By \eqref{ineq999} and the definition of $\tilde\Omega_{q+2}$
\begin{equation}
    \tilde\psi_{q+1}-\delta_{q+2}\ge \frac{3}{2}\delta_{q+2}-\frac{1}{4}\delta_{q+2}-\delta_{q+2}=\frac{1}{4} \delta_{q+2}\quad\text{in}\ \tilde\Omega_{q+2}.
\end{equation}
Let
\begin{equation*}
    a_{i,2}= \eta_{q+2}(\tilde\psi_{q+1} -\delta_{q+2})^{\frac{1}{2}}d_i^*
\end{equation*}
Note that $a_{i,2}=0$ in $\Omega-\tilde\Omega_{q+2}$. Then
\begin{equation*}
    \tilde D=\sum_{i=1}^{N_*} a_{i,2}^2\xi_i\otimes\xi_i,
\end{equation*}
and
\begin{equation*}
    \|\nabla^k a_{i,2}\|_0 \lesssim \delta_{q+1}^\frac{1}{2}l^{-k},\ \forall 0\le k\le 3.
\end{equation*}

Now we define $a_i$ as
\begin{equation}
    a_i = \begin{cases}
        a_{i,1}, & \text{if }\psi\ge \delta_{q+1},\\
        a_{i,2}, &\text{if }\psi\le\delta_{q+1}.
    \end{cases}
\end{equation}
Then
\begin{equation}
    \tilde D=\sum_{i=1}^{N_*} a_i^2\xi_i\otimes\xi_i.
\end{equation}
Note that $a_{i,1}$ and $a_{i,2}$ are both obtained by the decomposition of $\tilde D$, and $\tilde D$ is smooth, so $a_{i,1}$ and $a_{i,2}$ patches smoothly. Thus $a_i$ is smooth with
\begin{equation*}
    \|\nabla^k a_i\|_0
    \lesssim \delta_{q+1}^\frac{1}{2}l^{-k},\ \forall 0\le k\le 3
\end{equation*}
and
\begin{equation}
    a_i=0\quad\text{in}\ \Omega-\tilde\Omega_{q+2}. \label{a-i}
\end{equation}
\begin{rem}\label{rem995}
    The above arguments aim to find such $a_i$ smoothly compactly supported in $\tilde\Omega_{q+2}$, which are used as usual to reduce $D_q$ to near $\delta_{q+2}Id$. To this end, we have expected to construct $a_i$ in two domains separately and then glue them together. In $\tilde\Omega_{q+1}$ we resort to Lemma \ref{lem2}. While in $\Omega-\Omega_{q+1}$ we have $D_q=\psi_{q+1}Id$, so we simply  proceed as in Step 2. But this gluing procedure brings extra error which should be cancelled by $a_i$. So instead we first glue the deficit matrix to obtain $\tilde D$, so that $a_i$ automatically glue together in a smooth way. The exact expression of $\tilde D$ is determined by \eqref{eq*}.
\end{rem}

Let
\begin{align}
    &(v_0, w_0)=\eta^2(\tilde v,\tilde w)+(1-\eta^2)(V_q,W_q),\\
    & v_i= v_{i-1}+\frac{1}{\mu_i}\Gamma_1(a_i, \mu_i x\cdot \xi_i),\\
    & w_i= w_{i-1}-\frac{1}{\mu_i}\Gamma_1(a_i,\mu_i x\cdot \xi_i)\nabla v_{i-1}+\frac{1}{\mu_i}\Gamma_2(a_i,\mu_i x\cdot \xi_i) \xi_i,\\
    & (V_{q+1},W_{q+1})=(v_{N_*},w_{N_*}).
\end{align}
From the definition of $\eta$ and \eqref{deltasigma}  \eqref{a-i}, it is clear that \eqref{prop2.4} holds, i.e.
\begin{equation*}
    (V_{q+1},W_{q+1},D_{q+1})=(V_q,W_q,D_q)=(v^b,w^b,0)\quad\text{in}\ \Omega-\tilde\Omega_{q+2}.
\end{equation*}
We derive necessary estimates. We have
\begin{equation*}
    (v_0,w_0)-(V_q,W_q)=\eta^2(\tilde v-V_q,\tilde w-W_q).
\end{equation*}
Using \eqref{phil1} \eqref{ll} \eqref{prop2.a2'} \eqref{ineq999} \eqref{deltasigma} and $\|\nabla\eta\|_0\lesssim \delta_{q+1}^{-1}$, we get
\begin{align*}
    \|(v_0,w_0)-(V_q,W_q)\|_1
    &\lesssim \|\eta^2\|_0\|(\tilde v-V_q,\tilde w-W_q)\|_1+\|\nabla\eta^2\|_0\|(\tilde v-V_q,\tilde w-W_q)\|_0\\
    &\lesssim\|(V_q,W_q)\|_2 (l+l^2\|\nabla\eta\|_0)\\
    &\lesssim \sigma\delta_{q+1}^\frac{1}{2}
\end{align*}
and
\begin{equation*}
    \|(v_0,w_0)-(V_q,W_q)\|_{2+k}\lesssim \|(V_q,W_q)\|_2 l^{-k}\lesssim\delta_{q+1}^\frac{1}{2}\mu_0 l^{-k},\  k= 0,1.
\end{equation*}
As usual, the $i$-th step error is
\begin{equation}
    \E_i:={a}_i^2 \xi_i\otimes \xi_i
    +(\frac{1}{2}\nabla  {v}_{i-1}\otimes\nabla  {v}_{i-1}+\sym\nabla   {w}_{i-1})
    -(\frac{1}{2}\nabla  {v}_i\otimes\nabla  {v}_i+\sym\nabla   {w}_i).
\end{equation}
Using the above estimates, we can proceed as in Step 3 of Section 2 to get \eqref{prop2.0} \eqref{prop2.1} and
\begin{equation}\label{dddd}
    \|\E_i\|_0\lesssim \delta_{q+1}\frac{\mu_{i-1}}{\mu_i}.
\end{equation}
It remains to derive \eqref{prop2.2}. By the definition of $D_{q+1}$
\begin{align}
    D_{q+1} &= A-\frac{1}{2}\nabla V_{q+1}\otimes\nabla V_{q+1}-\sym\nabla W_{q+1}-\psi_{q+2}Id \notag\\
    &= A-\frac{1}{2}\nabla v_0\otimes\nabla v_0-\sym\nabla w_0-\psi_{q+2}Id \label{ttt}\\
    &\quad+\sum_{i=1}^{N_*}(\frac{1}{2}\nabla  {v}_{i-1}\otimes\nabla  {v}_{i-1}+\sym\nabla   {w}_{i-1})
    -(\frac{1}{2}\nabla  {v}_i\otimes\nabla  {v}_i+\sym\nabla   {w}_i). \label{sss}
\end{align}
We reformulate it. First, by the construction
\begin{align*}
    \nabla v_0 &= \eta^2\nabla\tilde v +(1-\eta^2)\nabla V_q +(\nabla\eta^2)(\tilde v-V_q),\\
    \nabla w_0 &= \eta^2\nabla\tilde w +(1-\eta^2)\nabla W_q +(\nabla\eta^2)\otimes(\tilde w-W_q),
\end{align*}
so
\begin{align*}
    &\nabla v_0\otimes \nabla v_0\\
    &= \eta^4\nabla \tilde v\otimes\nabla\tilde v +(1-\eta^2)^2\nabla V_q\otimes\nabla V_q +2\eta^2(1-\eta^2)\sym\nabla\tilde v\otimes\nabla V_q\\
    &\quad +2\sym[(\nabla \eta^2)(\tilde v-V_q)]\otimes[(\eta^2\nabla\tilde v+(1-\eta^2)\nabla V_q]\\
    &\quad +[(\nabla \eta^2)(\tilde v-V_q)]\otimes[(\nabla \eta^2)(\tilde v-V_q)]\\
    &= \eta^2\nabla\tilde v\otimes\nabla\tilde v +(1-\eta^2)\nabla V_q\otimes\nabla V_q -\eta^2(1-\eta^2)\nabla(\tilde v-V_q)\otimes\nabla(\tilde v-V_q)\\
    &\quad +2\sym[(\nabla \eta^2)(\tilde v-V_q)]\otimes[(\eta^2\nabla\tilde v+(1-\eta^2)\nabla V_q]\\
    &\quad +[(\nabla \eta^2)(\tilde v-V_q)]\otimes[(\nabla \eta^2)(\tilde v-V_q)].
\end{align*}
Since $V_q=0$ in $\Omega-\tilde\Omega_{q+1}$ and $\eta=1$ in the $l$-neighbourhood of $\tilde\Omega_{q+1}$, there holds
\begin{equation*}
    \eta^2(1-\eta^2)\nabla(\tilde v-V_q)\otimes\nabla(\tilde v-V_q) =0.
\end{equation*}
Next, by the definition of $\psi_q,\eta,\eta_q$, we find
\begin{eqnarray*}
    \psi_{q+2}  &=&  \eta^2_{q+2} \delta_{q+2} +(1-\eta^2_{q+2}) \psi\\
    &=& \eta^2_{q+2} \delta_{q+2} +(1-\eta^2_{q+1})(1-\eta^2_{q+2}) \psi \\
    &=& \eta^2_{q+2} \delta_{q+2} + (1-\eta^2_{q+2})[ (1-\eta^2_{q+1})\psi +\eta^2_{q+1}\delta_{q+1}-\eta^2_{q+1}\delta_{q+1}]\\
    &=& \eta^2_{q+2} \delta_{q+2} + (1-\eta^2_{q+2})[ \psi_{q+1}-\eta^2_{q+1}\delta_{q+1}]\\
     &=& \eta^2_{q+2} \delta_{q+2} + (1-\eta^2)(1-\eta^2_{q+2})[ \psi_{q+1}-\eta^2_{q+1}\delta_{q+1}]\\
     &=& \eta^2_{q+2} \delta_{q+2} + (1-\eta^2)(1-\eta^2_{q+2})\psi_{q+1}\\
     &=& \eta^2_{q+2} \delta_{q+2} + (1-\eta^2)(1-\eta^2_{q+2})\psi_{q+1}+\eta^2 \delta_{q+2} (1-\eta^2_{q+2})\\
    &=& \eta^2\delta_{q+2} +(1-\eta^2)\psi_{q+1} - (1-\eta^2)\eta_{q+2}^2(\psi_{q+1}-\delta_{q+2}).
\end{eqnarray*}
That is
\begin{align*}
    \psi_{q+2} &= \eta^2\delta_{q+2}+(1-\eta^2)\psi_{q+1} -(1-\eta^2)\eta_{q+2}^2(\psi_{q+1}-\delta_{q+2})\\
    &= \eta^2\delta_{q+2}+(1-\eta^2)\psi_{q+1} -(1-\eta^2)\eta_{q+2}^2(\tilde\psi_{q+1}-\delta_{q+2})\\
    &\quad -(1-\eta^2)\eta_{q+2}^2(\psi_{q+1}-\tilde\psi_{q+2}).
\end{align*}
Finally we write
\begin{equation*}
A=\eta^2 \tilde A +(1-\eta^2)A +\eta^2(A-\tilde A).
\end{equation*}
Putting these together, we reformulate \eqref{ttt} as
\begin{align*}
    &A-\frac{1}{2}\nabla v_0\otimes\nabla v_0-\sym\nabla w_0-\psi_{q+2}Id\\
    &=\left\{\begin{aligned}
      &\eta^2(\tilde A-\frac{1}{2} \nabla\tilde v\otimes\nabla\tilde v-\sym\nabla\tilde w-\delta_{q+2}Id)\\
      & +(1-\eta^2)(A-\frac{1}{2}\nabla V_q\otimes\nabla V_q-\sym\nabla W_q-\psi_{q+1}Id)\\
      & +(1-\eta^2)\eta_{q+2}^2(\tilde \psi_{q+1}-\delta_{q+2})Id
      \end{aligned}\right\}(*)\\
    &\quad + \eta^2(A-\tilde A)+(1-\eta^2)\eta_{q+2}^2(\psi_{q+1}-\tilde\psi_{q+1})Id\\
    &\quad -\sym[(\nabla \eta^2)(\tilde v-V_q)]\otimes[(\eta^2\nabla\tilde v+(1-\eta^2)\nabla V_q]\\
    &\quad -\frac{1}{2}[(\nabla \eta^2)(\tilde v-V_q)]\otimes[(\nabla \eta^2)(\tilde v-V_q)]\\
    &\quad -\sym(\nabla \eta^2)\otimes(\tilde w-W_q)
\end{align*}
Compared with the definitions of $\tilde D,D_q$ in \eqref{tildeD} \eqref{ccc}, we see
\begin{equation}\label{eq*}
    (*) = \tilde D + (1-\eta^2)D_q.
\end{equation}
As for \eqref{sss}, recall the definition of $\E_i$, then we see
\begin{equation*}
    \tilde D+\sum_{i=1}^{N_*}(\frac{1}{2}\nabla  {v}_{i-1}\otimes\nabla  {v}_{i-1}+\sym\nabla   {w}_{i-1})
    -(\frac{1}{2}\nabla  {v}_i\otimes\nabla  {v}_i+\sym\nabla   {w}_i) =\sum_{i=1}^{N_*}\E_i.
\end{equation*}
Therefore we conclude
\begin{align*}
    D_{q+1}&=\sum_{i=1}^{N_*}\E_i +(1-\eta^2)D_q\\
    &\quad +\eta^2(A-\tilde A)+(1-\eta^2)\eta_{q+2}^2(\psi_{q+1}-\tilde\psi_{q+1})Id\\
    &\quad -\sym[(\nabla \eta^2)(\tilde v-V_q)]\otimes[(\eta^2\nabla\tilde v+(1-\eta^2)\nabla V_q]\\
    &\quad -\frac{1}{2}[(\nabla \eta^2)(\tilde v-V_q)]\otimes[(\nabla \eta^2)(\tilde v-V_q)]\\
    &\quad -\sym(\nabla \eta^2)\otimes(\tilde w-W_q).
\end{align*}
We estimate term by term. First, because of \eqref{dddd}, there holds
\begin{equation*}
    \|\sum_{i=1}^{N_*}\E_i\|_0\lesssim\delta_{q+1}(\frac{\mu_0}{\mu_1}+\frac{\mu_1}{\mu_2}+...+\frac{\mu_{N_*-1}}{\mu_{N_*}}).
\end{equation*}
And by assumption \eqref{prop2.a4}
\begin{equation*}
    (1-\eta^2)D_q = 0.
\end{equation*}
Next, recall $A\in C^{2,\kappa}(\bar\Omega,\S)$, hence
\begin{equation*}
    \|\eta^2(A-\tilde A)\|_0\lesssim \|A\|_1 l\lesssim l.
\end{equation*}
Also recall $\|\psi_{q+1}\|_1\lesssim 1$, so
\begin{equation*}
\|(1-\eta^2)\eta_{q+2}^2(\psi_{q+1}-\tilde\psi_{q+1})Id\|_0\lesssim \|\psi_{q+1}\|_1 l\lesssim l.
\end{equation*}
Similarly, using \eqref{phil1}, \eqref{prop2.a1}-\eqref{prop2.a3} and $\|\nabla\eta\|_0\lesssim \delta_{q+1}^{-1}$ , we obtain
\begin{gather*}
    \|-\sym[(\nabla \eta^2)(\tilde v-V_q)]\otimes[(\eta^2\nabla\tilde v+(1-\eta^2)\nabla V_q]\|_0\\
    \lesssim \|\eta\|_1\|V_q\|_2 l^2\cdot\|V_q\|_1
    \lesssim \sqrt{K}\delta_{q+1}^{-\frac{1}{2}}l,\\
    \| -\frac{1}{2}[(\nabla \eta^2)(\tilde v-V_q)]\otimes[(\nabla \eta^2)(\tilde v-V_q)]\|_0\\
    \lesssim (\|\eta\|_1\|V_q\|_2 l^2)^2
    \lesssim \delta_{q+1}^{-1}l^2,\\
    \|-\sym(\nabla \eta^2)\otimes(\tilde w-W_q)\|
    \lesssim \|\eta\|_1\|W_q\|_2 l^2
    \lesssim \delta_{q+1}^{-\frac{1}{2}}l.
\end{gather*}
As a result
\begin{equation*}
     \|D_{q+1}\|_0\lesssim \delta_{q+1}^{-1}l^2+\sqrt{K}\delta_{q+1}^{-\frac{1}{2}}l+\delta_{q+1}(\frac{\mu_0}{\mu_1}+\frac{\mu_1}{\mu_2}+...+\frac{\mu_{N_*-1}}{\mu_{N_*}}).
\end{equation*}
We take
\begin{equation*}
    \mu_i:=\mu_0^{1-\frac{i}{N_*}}\mu_{N_*}^\frac{i}{N_*}
\end{equation*}
and require
\begin{equation}\label{ww1}
    \sqrt{K}\delta_{q+1}^{-\frac{1}{2}}l\le \frac{\sigma\delta_{q+2}}{C},\quad 
    \frac{\mu_0}{\mu_{N_*}}\le
    (\frac{\sigma\delta_{q+2}}{C\delta_{q+1}})^{N_*}
\end{equation}
so that \eqref{prop2.2} follows. This concludes the proof of Proposition \ref{prop2}.
\end{proof}

\textbf{Step 4 (Conclusion): $(V_q,W_q)\to(v,w)$.}
The new inequalities we need are \eqref{ineq999}
\[
l=\frac{\sigma\delta_{q+1}^\frac{1}{2}}{CK\delta_q^\frac{1}{2}\lambda_q}\le \frac{\delta_{q+2}}{C_1},
\]
and the first inequality in \eqref{ww1}
\begin{equation*}
    l=\frac{\sigma\delta_{q+1}^\frac{1}{2}}{CK\delta_q^\frac{1}{2}\lambda_q}\le \frac{\sigma}{C_2\sqrt{K}}\delta_{q+1}^{\frac{1}{2}}\delta_{q+2}.
\end{equation*}
These amount to
\begin{equation*}
    \frac{1}{2}-c\le -b^2+\frac{1}{b^q}\log_a C(\sigma, K).
\end{equation*}
Recall at the end of Section \ref{sec1}, we take $a>1$ sufficiently large, $b>1$ close to $1$ and $c>N_*+\frac{1}{2}\ge\frac{3}{2}$, so this can be achieved without breaking the other inequalities we need. The rest is the same as in Section \ref{sec1}. This finishes the whole proof of Theorem \ref{mainthm3}, which yields Theorem \ref{mainthm2} and Theorem \ref{mainthm}.


\section*{Acknowledgments}
The second named author is partially supported by CAS Project for Young Scientists in Basic Research, Grant No.YSBR-031.

\appendix

\small

\bibliographystyle{plainnat}
\bibliography{ref}

\begin{thebibliography}{44}
\providecommand{\natexlab}[1]{#1}
\providecommand{\url}[1]{\texttt{#1}}
\expandafter\ifx\csname urlstyle\endcsname\relax
  \providecommand{\doi}[1]{doi: #1}\else
  \providecommand{\doi}{doi: \begingroup \urlstyle{rm}\Url}\fi

\bibitem[Aleksandrov(1958)]{Aleksandrov58}
A.~D. Aleksandrov.
\newblock Dirichlet's problem for the equation {${\rm Det}\,||z\sb{ij}|| =\varphi (z\sb{1},\cdots,z\sb{n},z, x\sb{1},\cdots, x\sb{n})$}. {I}.
\newblock \emph{Vestnik Leningrad. Univ. Ser. Mat. Meh. Astronom.}, 13\penalty0 (1):\penalty0 5--24, 1958.

\bibitem[Borisov(1965)]{borisov1965isometric}
J.~F. Borisov.
\newblock {{\(C^{1,\alpha}\)}}-isometric immersions of {Riemann} spaces.
\newblock \emph{Sov. Math., Dokl.}, 6:\penalty0 869--871, 1965.

\bibitem[Borisov(1959)]{borisov1959parallel}
Yu.~F. Borisov.
\newblock The parallel translation on a smooth surface. {I-IV}.
\newblock \emph{Vestn. Leningr. Univ., Mat. Mekh. Astron.}, 14\penalty0 (3):\penalty0 83--92, 1959.

\bibitem[Borisov(2004)]{borisov2004irregular}
Yu.~F. Borisov.
\newblock Irregular {{\(C^{1,\beta}\)}}-surfaces with analytic metric.
\newblock \emph{Sib. Mat. Zh.}, 45\penalty0 (1):\penalty0 25--61, 2004.

\bibitem[Caffarelli et~al.(1985)Caffarelli, Nirenberg, and Spruck]{caffarelli1985dirichlet}
L.~Caffarelli, L.~Nirenberg, and J.~Spruck.
\newblock The {D}irichlet problem for nonlinear second order elliptic equations, {III}: Functions of the eigenvalues of the {H}essian.
\newblock \emph{Acta Mathematica}, 155\penalty0 (1):\penalty0 261--301, 1985.

\bibitem[Cao(2023)]{cao23dirichlet}
Wentao Cao.
\newblock Very weak solutions of the {Dirichlet} problem for the two dimensional {Monge}-{Amp{\`e}re} equation.
\newblock \emph{J. Funct. Anal.}, 284\penalty0 (8):\penalty0 25, 2023.

\bibitem[Cao and Sz{\'e}kelyhidi(2022)]{cao22global}
Wentao Cao and L{\'a}szl{\'o} Sz{\'e}kelyhidi.
\newblock Global {Nash}-{Kuiper} theorem for compact manifolds.
\newblock \emph{J. Differ. Geom.}, 122\penalty0 (1):\penalty0 35--68, 2022.

\bibitem[Cao and Sz{\'e}kelyhidi~Jr.(2019)]{cao19very}
Wentao Cao and L{\'a}szl{\'o} Sz{\'e}kelyhidi~Jr.
\newblock Very weak solutions to the two-dimensional {Monge}-{Amp{\`e}re} equation.
\newblock \emph{Sci. China, Math.}, 62\penalty0 (6):\penalty0 1041--1056, 2019.

\bibitem[Cao et~al.(2023)Cao, Hirsch, and Inauen]{cao23c1frac13}
Wentao Cao, Jonas Hirsch, and Dominik Inauen.
\newblock {$C^{1,\frac{1}{3}-}$} very weak solutions to the two dimensional {Monge-Amp\`ere} equation.
\newblock \emph{Preprint arXiv: 2310.06693}, 2023.

\bibitem[Codenotti and Lewicka(2019)]{codenotii2019visualization}
Luca Codenotti and Marta Lewicka.
\newblock Visualization of the convex integration solutions to the {Monge}-{Amp{\`e}re} equation.
\newblock \emph{Evol. Equ. Control Theory}, 8\penalty0 (2):\penalty0 273--300, 2019.

\bibitem[Conti et~al.(2012{\natexlab{a}})Conti, De~Lellis, and Sz{\'e}kelyhidi]{conti12errata}
Sergio Conti, Camillo De~Lellis, and L{\'a}szl{\'o} Sz{\'e}kelyhidi.
\newblock Errata on “{{\(h\)}}-principle and rigidity for {{\(C^{1,\alpha}\)}} isometric embeddings”.
\newblock 2012{\natexlab{a}}.
\newblock www.math.ias.edu/delellis/sites/math.ias.edu.delellis/files/Errata-iso.pdf.

\bibitem[Conti et~al.(2012{\natexlab{b}})Conti, De~Lellis, and Sz{\'e}kelyhidi]{conti12hprinciple}
Sergio Conti, Camillo De~Lellis, and L{\'a}szl{\'o} Sz{\'e}kelyhidi.
\newblock {{\(h\)}}-principle and rigidity for {{\(C^{1,\alpha}\)}} isometric embeddings.
\newblock In \emph{Nonlinear partial differential equations. The Abel symposium 2010.}, pages 83--116. Berlin: Springer, 2012{\natexlab{b}}.

\bibitem[Daniel and Hornung(2022)]{jean22revisiting}
Jean-Paul Daniel and Peter Hornung.
\newblock Revisiting the {{\(C^{1, \alpha}\ h\)}}-principle for the {Monge}-{Amp{\`e}re} equation.
\newblock \emph{Anal. PDE}, 15\penalty0 (7):\penalty0 1763--1774, 2022.

\bibitem[De~Lellis et~al.(2018)De~Lellis, Inauen, and Sz{\'e}kelyhidi]{delellis18nash}
Camillo De~Lellis, Dominik Inauen, and L{\'a}szl{\'o}~jun. Sz{\'e}kelyhidi.
\newblock A {Nash}-{Kuiper} theorem for {{\(C^{1,\frac{1}{5}-\delta}\)}} immersions of surfaces in 3 dimensions.
\newblock \emph{Rev. Mat. Iberoam.}, 34\penalty0 (3):\penalty0 1119--1152, 2018.

\bibitem[Gromov(1974)]{gromov74convex}
M.~L. Gromov.
\newblock Convex integration of differential relations. {I}.
\newblock \emph{Math. USSR, Izv.}, 7:\penalty0 329--343, 1974.

\bibitem[Gromov(1986)]{gromov86partial}
Mikhael Gromov.
\newblock \emph{Partial differential relations}, volume~9 of \emph{Ergeb. Math. Grenzgeb., 3. Folge}.
\newblock Springer, Cham, 1986.

\bibitem[Gromov(2017)]{gromov17geometric}
Misha Gromov.
\newblock Geometric, algebraic, and analytic descendants of {Nash} isometric embedding theorems.
\newblock \emph{Bull. Am. Math. Soc., New Ser.}, 54\penalty0 (2):\penalty0 173--245, 2017.
\newblock ISSN 0273-0979.

\bibitem[Guan and Qiu(2019)]{guan2019interior}
Pengfei Guan and Guohuan Qiu.
\newblock Interior {$C^2$} regularity of convex solutions to prescribing scalar curvature equations.
\newblock \emph{Duke Math. J.}, 168\penalty0 (9):\penalty0 1641--1663, 2019.

\bibitem[Guti\'{e}rrez(2016)]{Gutirrez16}
Cristian~E. Guti\'{e}rrez.
\newblock \emph{The {M}onge-{A}mp\`ere equation}, volume~89 of \emph{Progress in Nonlinear Differential Equations and their Applications}.
\newblock Birkh\"{a}user/Springer, [Cham], second edition, 2016.

\bibitem[Heinz(1959)]{heinz1959elliptic}
Erhard Heinz.
\newblock On elliptic {M}onge-{A}mp\`ere equations and {W}eyl's embedding problem.
\newblock \emph{J. Analyse Math.}, 7:\penalty0 1--52, 1959.

\bibitem[Iwaniec(2001)]{iwaniec01concept}
Tadeusz Iwaniec.
\newblock On the concept of the weak {Jacobian} and {Hessian}.
\newblock In \emph{Papers on analysis: a volume dedicated to Olli Martio on the occasion of his 60th birthday}, pages 181--205. Jyv{\"a}skyl{\"a}: Univ. Jyv{\"a}skyl{\"a}, Institut f{\"u}r Mathematik und Statistik, 2001.

\bibitem[Kuiper(1955)]{kuiper}
Nicolaas~H. Kuiper.
\newblock On {{\(C^1\)}}-isometric imbeddings. {I}, {II}.
\newblock \emph{Nederl. Akad. Wet., Proc., Ser. A}, 58:\penalty0 545--556, 683--689, 1955.

\bibitem[Lellis and Pakzad(2021)]{delellis2021geometry}
Camillo~De Lellis and Mohammad~Reza Pakzad.
\newblock The geometry of {$C^{1,\alpha}$} flat isometric immersions.
\newblock \emph{Preprint arxiv: 2001.11000}, 2021.

\bibitem[Lewicka(2023{\natexlab{a}})]{lewicka22monge}
Marta Lewicka.
\newblock The {Monge-Amp\`ere} system: convex integration in arbitrary dimension and codimension.
\newblock \emph{Preprint arXiv: 2210.04363}, 2023{\natexlab{a}}.

\bibitem[Lewicka(2023{\natexlab{b}})]{lewicka23monge2d}
Marta Lewicka.
\newblock The {Monge-Amp\`ere} system: convex integration with improved regularity in dimension two and arbitrary codimension.
\newblock \emph{Preprint arXiv: 2308.13719}, 2023{\natexlab{b}}.

\bibitem[Lewicka and Pakzad(2017)]{lewicka17convex}
Marta Lewicka and Mohammad~Reza Pakzad.
\newblock Convex integration for the {Monge}-{Amp{\`e}re} equation in two dimensions.
\newblock \emph{Anal. PDE}, 10\penalty0 (3):\penalty0 695--727, 2017.

\bibitem[McGonagle et~al.(2019)McGonagle, Song, and Yuan]{mcgonagle2019hessian}
Matt McGonagle, Chong Song, and Yu~Yuan.
\newblock Hessian estimates for convex solutions to quadratic {H}essian equation.
\newblock \emph{Ann. Inst. H. Poincar\'{e} C Anal. Non Lin\'{e}aire}, 36\penalty0 (2):\penalty0 451--454, 2019.

\bibitem[Mooney(2021)]{Mooney2021PAMS}
Connor Mooney.
\newblock Strict 2-convexity of convex solutions to the quadratic {H}essian equation.
\newblock \emph{Proc. Amer. Math. Soc.}, 149\penalty0 (6):\penalty0 2473--2477, 2021.

\bibitem[Nash(1954)]{nash}
John Nash.
\newblock {{\(C^1\)}} isometric imbeddings.
\newblock \emph{Ann. Math. (2)}, 60:\penalty0 383--396, 1954.

\bibitem[Pakzad(2022)]{pakzad2022convexity}
Mohammad~Reza Pakzad.
\newblock Convexity of weakly regular surfaces of distributional nonnegative intrinsic curvature.
\newblock \emph{Preprint arXiv: 2206.09224}, 2022.

\bibitem[Pogorelov(1959)]{pogorelov1959rigidity}
A.~V. Pogorelov.
\newblock The rigidity of general convex surfaces.
\newblock \emph{Dokl. Akad. Nauk SSSR}, 128:\penalty0 475--477, 1959.

\bibitem[Pogorelov(1973)]{pogorelov1973extrinsic}
A.~V. Pogorelov.
\newblock \emph{Extrinsic geometry of convex surfaces. {Translated} from the {Russian} by {Israel} {Program} for {Scientific} {Translations}}, volume~35 of \emph{Transl. Math. Monogr.}
\newblock American Mathematical Society (AMS), Providence, RI, 1973.

\bibitem[Pogorelov(1978)]{pogorelov1978multidimensional}
A.V. Pogorelov.
\newblock \emph{The {M}inkowski multidimensional problem}.
\newblock Scripta Series in Mathematics. V. H. Winston \& Sons, Washington, DC; Halsted Press [John Wiley \& Sons], New York-Toronto-London, 1978.
\newblock Translated from the Russian by Vladimir Oliker, Introduction by Louis Nirenberg.

\bibitem[Qiu()]{qiu2017interior}
Guohuan Qiu.
\newblock Interior {H}essian estimates for sigma-2 equations in dimension three.
\newblock \emph{Preprint arXiv:1711.00948}.
\newblock accepted by Front. Math.

\bibitem[Qiu(2024)]{qiu2019interior}
Guohuan Qiu.
\newblock Interior curvature estimates for hypersurfaces of prescribing scalar curvature in dimension three.
\newblock \emph{arXiv:1901.07791, accepted by Amer. J. Math.}, 2024.

\bibitem[Shankar and Yuan(2020)]{shankar2020hessian}
Ravi Shankar and Yu~Yuan.
\newblock Hessian estimate for semiconvex solutions to the sigma-2 equation.
\newblock \emph{Calc. Var. Partial Differential Equations}, 59\penalty0 (1):\penalty0 Paper No. 30, 12, 2020.

\bibitem[Shankar and Yuan(2021)]{shanker2021regularity}
Ravi Shankar and Yu~Yuan.
\newblock Regularity for almost convex viscosity solutions of the sigma-2 equation.
\newblock \emph{J. Math. Study}, 54\penalty0 (2):\penalty0 164--170, 2021.

\bibitem[Shankar and Yuan(2023)]{shankar2023hessian}
Ravi Shankar and Yu~Yuan.
\newblock Hessian estimates for the sigma-2 equation in dimension four.
\newblock \emph{Preprint arXiv: 2305.12587}, 2023.

\bibitem[Trudinger(1990)]{trudinger1990dirichlet}
Neil~S. Trudinger.
\newblock The {D}irichlet problem for the prescribed curvature equations.
\newblock \emph{Arch. Rational Mech. Anal.}, 111\penalty0 (2):\penalty0 153--179, 1990.

\bibitem[Trudinger and Wang(1997)]{Trudinger-Wang1}
Neil~S. Trudinger and Xu-Jia Wang.
\newblock Hessian measures. {I}.
\newblock volume~10, pages 225--239. 1997.
\newblock Dedicated to Olga Ladyzhenskaya.

\bibitem[Trudinger and Wang(1999)]{Trudinger-Wang2}
Neil~S. Trudinger and Xu-Jia Wang.
\newblock Hessian measures. {II}.
\newblock \emph{Ann. of Math. (2)}, 150\penalty0 (2):\penalty0 579--604, 1999.

\bibitem[Trudinger and Wang(2002)]{Trudinger-Wang3}
Neil~S. Trudinger and Xu-Jia Wang.
\newblock Hessian measures. {III}.
\newblock \emph{J. Funct. Anal.}, 193\penalty0 (1):\penalty0 1--23, 2002.

\bibitem[Urbas(1990)]{urbas1990existence}
J.~Urbas.
\newblock On the existence of nonclassical solutions for two classes of fully nonlinear elliptic equations.
\newblock \emph{Indiana University Mathematics Journal}, pages 355--382, 1990.

\bibitem[Warren and Yuan(2009)]{warren2009hessian}
Micah Warren and Yu~Yuan.
\newblock Hessian estimates for the sigma-2 equation in dimension 3.
\newblock \emph{Comm. Pure Appl. Math.}, 62\penalty0 (3):\penalty0 305--321, 2009.

\end{thebibliography}

\end{document}